\newtheorem{example}[theorem]{Example}
\newcommand{\nprod}[1]{{\cdot}_#1}
\newcommand{\unf}[2]{{\mathbf  #1}_{(#2)}}
\newcommand{\newA}{\mathbf H}
\newcommand{\newC}{\mathbf G}
\newcommand{\newc}{\mathbf g}
\newcommand{\newK}{I_3}
\newcommand{\newn}{I_1}
\newcommand{\DrawSetOfSolutions}[4]{
\begin{tikzpicture}[line join=bevel,x={(#4*0.25\textwidth,#4*0.375\textwidth)},y={(#4*\textwidth,0)},z={(0,#4*\textwidth)}]
	\tikzset{%
	    my thick dashed color/.style={thick,color=blue,dashed},
	    my thick solid color/.style={thick,color=blue},
	    my thin solid color/.style={thin,color=gray},
	    my thin dashed color/.style={thin,color=gray,dashed},
	    vertex color/.style={color=red}
	}

\def\II{1/#1}
\def\JJ{1/#2}
\def\KK{1/#3}

\coordinate (S) at (\II,\JJ,\KK);
\coordinate (S2) at (\II,\II,\II);
\coordinate (N) at (1,1,1);
\coordinate (X1) at (1-\JJ+\KK,\JJ,\KK);
\coordinate (X2) at (1,\JJ,\JJ);
\coordinate (Y1) at (\II,1-\II+\KK,\KK);
\coordinate (Y2) at (\II,1,\II);
\coordinate (Z1) at (\II,\JJ,1-\II+\JJ);
\coordinate (Z2) at (\II,\II,1);

\draw [my thick solid color] (S) -- (Z1) -- (Z2) -- (Y2) -- (Y1) -- cycle;
\draw [my thick solid color] (N) -- (Z2);
\draw [my thick solid color] (N) -- (Y2);

\draw [my thick dashed color] (Z1) -- (X2) -- (X1) -- (S);
\draw [my thick dashed color] (N) -- (X2);
\draw [my thick dashed color] (X1) -- (Y1);

\coordinate (P1) at (\II,\JJ,1);
\coordinate (P2) at (1,\JJ,1);
\coordinate (P4) at (\II,1,1);
\coordinate (Q2) at (1,\JJ,\KK);
\coordinate (Q3) at (1,1,\KK);
\coordinate (Q4) at (\II,1,\KK);

\draw [my thin solid color] (Z1) -- (P1) -- (Z2) -- (P4) -- (Y2) -- (Q4) -- (Y1);
\draw [my thin solid color] (P1) -- (P2) -- (N) -- (P4);
\draw [my thin solid color] (N) -- (Q3) -- (Q4);
\draw [my thin dashed color] (P2) -- (X2) -- (Q2) -- (X1);
\draw [my thin dashed color] (Q2) -- (Q3);

\fill [vertex color] (S)   circle[radius=2pt];
\ifthenelse{#1=#2 \AND #2=#3} %
{ 
\draw (S) node  [below left] {$S$}} %
{ 
\draw (S) node  [below left] {$S$}};
\fill [vertex color] (N)  circle[radius=2pt];
\draw (N) node  [above right] {$N$};
\ifthenelse{#2=#3} %
{\fill [vertex color] (X1)  circle[radius=2pt];
 \draw (X1) node  [left] {${X}$}} %
{\fill [vertex color] (X1)  circle[radius=2pt];
\draw (X1) node  [left] {${X_1}$};
\fill [vertex color] (X2)  circle[radius=2pt];
\draw (X2) node  [left] {${X_2}$}};
\ifthenelse{#1=#3} %
{\fill [vertex color] (Y1)  circle[radius=2pt];
\draw (Y1) node  [below right] {${Y}$}} %
{\fill [vertex color] (Y1)  circle[radius=2pt];
\draw (Y1) node  [below] {${Y_1}$};
\fill [vertex color] (Y2)  circle[radius=2pt];
\draw (Y2) node  [right] {${Y_2}$}};
\ifthenelse{#1=#2} %
{\fill [vertex color] (Z1)  circle[radius=2pt];
\draw (Z1) node  [above left] {${Z}$}} %
{\fill [vertex color] (Z1)  circle[radius=2pt];
\draw (Z1) node  [left] {${Z_1}$};
\fill [vertex color] (Z2)  circle[radius=2pt];
\draw (Z2) node  [above] {${Z_2}$}};

\coordinate (C) at (0,0,0);
    \coordinate (E1) at (1,0,0);
    \coordinate (E2) at (0,1,0);
    \coordinate (E3) at (0,0,1);
    \draw[->] (C) -- ($(C)+(E1)$) node  [left] {${\sigma_1^2}$};
    \draw[->] (C) -- ($(C)+(E2)$) node  [below] {${\sigma_2^2}$};
    \draw[->] (C) -- ($(C)+(E3)$) node  [left] {${\sigma_3^2}$};
    \draw (C) node  [below left] {${O}$};

\end{tikzpicture}
}
\newcommand{\Drawdifference}[4]{
\begin{tikzpicture}[line join=bevel,x={(#4*0.25\textwidth,#4*0.375\textwidth)},y={(#4*\textwidth,0)},z={(0,#4*\textwidth)}]

	\tikzset{%
	    my thick dashed color/.style={thick,color=blue,dashed},
	    my thick solid color/.style={thick,color=blue},
	    my thin solid color/.style={thin,color=gray},
	    my thin dashed color/.style={thin,color=gray,dashed},
	    vertex color/.style={color=red}
	}

\def\II{1/#1}
\def\JJ{1/#2}
\def\KK{1/#3}

\coordinate (S) at (\II,\JJ,\KK);
\coordinate (S2) at (\II,\II,\II);
\coordinate (N) at (1,1,1);
\coordinate (X1) at (1-\JJ+\KK,\JJ,\KK);
\coordinate (X2) at (1,\JJ,\JJ);
\coordinate (Y1) at (\II,1-\II+\KK,\KK);
\coordinate (Y2) at (\II,1,\II);
\coordinate (Z1) at (\II,\JJ,1-\II+\JJ);
\coordinate (Z2) at (\II,\II,1);

\draw [my thick solid color] (S) -- (Z1) -- (Z2) -- (S2) -- (Y2) -- (Y1) -- cycle;

\draw [my thick dashed color] (Z1) -- (X2) -- (X1) -- (S);
\draw [my thick dashed color] (X1) -- (Y1);
\draw [my thick dashed color] (X2) -- (S2);
\draw [my thick solid color] (Y2) -- (S2);
\draw [my thick solid color] (Z2) -- (S2);
\draw [my thick dashed color] (X2) -- (Y2);
\draw [my thick dashed color] (X2) -- (Z2);

\coordinate (P1) at (\II,\JJ,1);
\coordinate (P2) at (1,\JJ,1);
\coordinate (P4) at (\II,1,1);
\coordinate (Q2) at (1,\JJ,\KK);
\coordinate (Q3) at (1,1,\KK);
\coordinate (Q4) at (\II,1,\KK);

\draw [my thin solid color] (Z1) -- (P1) -- (Z2) -- (P4) -- (Y2) -- (Q4) -- (Y1);
\draw [my thin solid color] (P1) -- (P2) -- (N) -- (P4);
\draw [my thin solid color] (N) -- (Q3) -- (Q4);
\draw [my thin dashed color] (P2) -- (X2) -- (Q2) -- (X1);
\draw [my thin dashed color] (Q2) -- (Q3);

\fill [vertex color] (S2)   circle[radius=2pt];
\draw (S2) node  [above right] {$S_2$};
\fill [vertex color] (S)   circle[radius=2pt];
\ifthenelse{#1=#2 \AND #2=#3} %
{ 
\draw (S) node  [below left] {$S$}} %
{ 
\draw (S) node  [below left] {$S$}};
\fill [vertex color] (N)  circle[radius=2pt];
\draw (N) node  [above right] {$N$};
\ifthenelse{#2=#3} %
{\fill [vertex color] (X1)  circle[radius=2pt];
 \draw (X1) node  [left] {${X}$}} %
{\fill [vertex color] (X1)  circle[radius=2pt];
\draw (X1) node  [left] {${X_1}$};
\fill [vertex color] (X2)  circle[radius=2pt];
\draw (X2) node  [left] {${X_2}$}};
\ifthenelse{#1=#3} %
{\fill [vertex color] (Y1)  circle[radius=2pt];
\draw (Y1) node  [below right] {${Y}$}} %
{\fill [vertex color] (Y1)  circle[radius=2pt];
\draw (Y1) node  [below] {${Y_1}$};
\fill [vertex color] (Y2)  circle[radius=2pt];
\draw (Y2) node  [right] {${Y_2}$}};
\ifthenelse{#1=#2} %
{\fill [vertex color] (Z1)  circle[radius=2pt];
\draw (Z1) node  [above left] {${Z}$}} %
{\fill [vertex color] (Z1)  circle[radius=2pt];
\draw (Z1) node  [left] {${Z_1}$};
\fill [vertex color] (Z2)  circle[radius=2pt];
\draw (Z2) node  [above] {${Z_2}$}};

\coordinate (C) at (0,0,0);
    \coordinate (E1) at (1,0,0);
    \coordinate (E2) at (0,1,0);
    \coordinate (E3) at (0,0,1);
    \draw[->] (C) -- ($(C)+(E1)$) node  [left] {${\sigma_1^2}$};
    \draw[->] (C) -- ($(C)+(E2)$) node  [below] {${\sigma_2^2}$};
    \draw[->] (C) -- ($(C)+(E3)$) node  [left] {${\sigma_3^2}$};
    \draw (C) node  [below left] {${O}$};

\end{tikzpicture}
}
\newcommand{\DrawSetOfSolutionsproved}[4]{
\begin{tikzpicture}[line join=bevel,x={(#4*0.25\textwidth,#4*0.375\textwidth)},y={(#4*\textwidth,0)},z={(0,#4*\textwidth)}]
	\tikzset{%
	    my thick dashed color/.style={thick,color=blue,dashed},
	    my thick solid color/.style={thick,color=blue},
	    my thin solid color/.style={thin,color=gray},
	    my thin dashed color/.style={thin,color=gray,dashed},
	    vertex color/.style={color=red}
	}

\def\II{1/#1}
\def\JJ{1/#2}
\def\KK{1/#3}

\coordinate (S) at (\II,\JJ,\KK);
\coordinate (S2) at (\II,\II,\II);
\coordinate (N) at (1,1,1);
\coordinate (X1) at (1-\JJ+\KK,\JJ,\KK);
\coordinate (X2) at (1,\JJ,\JJ);
\coordinate (Y1) at (\II,1-\II+\KK,\KK);
\coordinate (Y2) at (\II,1,\II);
\coordinate (Z1) at (\II,\JJ,1-\II+\JJ);
\coordinate (Z2) at (\II,\II,1);

\draw [my thick solid color] (X2) -- (Z2) -- (S2) -- (Y2) -- (Z2) -- (N) -- (Y2) -- (X2) -- (S2);

\draw [my thick dashed color] (N) -- (X2);

\coordinate (P1) at (\II,\JJ,1);
\coordinate (P2) at (1,\JJ,1);
\coordinate (P4) at (\II,1,1);
\coordinate (Q2) at (1,\JJ,\KK);
\coordinate (Q3) at (1,1,\KK);
\coordinate (Q4) at (\II,1,\KK);

\draw [my thin solid color] (S) -- (P1) -- (Z2) -- (P4) -- (Y2) -- (Q4) -- cycle;
\draw [my thin solid color] (P1) -- (P2) -- (N) -- (P4);
\draw [my thin solid color] (N) -- (Q3) -- (Q4);
\draw [my thin dashed color] (P2) -- (X2) -- (Q2) -- (S);
\draw [my thin dashed color] (Q2) -- (Q3);

\fill [vertex color] (N)  circle[radius=2pt];
\draw (N) node  [above right] {$N$};
\fill [vertex color] (X2)  circle[radius=2pt];
\draw (X2) node  [left] {${X_2}$};
\fill [vertex color] (Y2)  circle[radius=2pt];
\draw (Y2) node  [right] {${Y_2}$};
\fill [vertex color] (Z2)  circle[radius=2pt];
\draw (Z2) node  [above] {${Z_2}$};
\fill [vertex color] (S2)   circle[radius=2pt];
\draw (S2) node  [above right] {$S_2$};

\coordinate (C) at (0,0,0);
    \coordinate (E1) at (1,0,0);
    \coordinate (E2) at (0,1,0);
    \coordinate (E3) at (0,0,1);
    \draw[->] (C) -- ($(C)+(E1)$) node  [left] {${\sigma_1^2}$};
    \draw[->] (C) -- ($(C)+(E2)$) node  [below] {${\sigma_2^2}$};
    \draw[->] (C) -- ($(C)+(E3)$) node  [left] {${\sigma_3^2}$};
    \draw (C) node  [below left] {${O}$};

\end{tikzpicture}
}
\newcommand{\TheTitle}{On the largest multilinear singular values of higher-order tensors} 
\newcommand{\TheAuthors}{Ignat Domanov, Alwin Stegeman, and Lieven De Lathauwer}
\headers{\TheTitle}{\TheAuthors}
\title{{\TheTitle}\thanks{Submitted to the editors DATE.
\funding{This work was funded by (1) Research Council KU Leuven: C1 project c16/15/059-nD; (2) F.W.O.:  project  G.0830.14N, G.0881.14N;  (3) the Belgian Federal Science Policy Office: IUAP P7 (DYSCO II,  Dynamical systems, control
 and optimization,  2012-2017); (4)  EU: The research leading to these results has received funding from the European Research Council under the European Union's Seventh Framework Programme (FP7/2007-2013) / ERC Advanced Grant: BIOTENSORS (no.  339804). This paper reflects only the authors' views and the Union is not liable for any use that may be made of the contained information}}}
\author{
  Ignat Domanov\thanks{
  Group Science, Engineering and Technology, KU Leuven - Kulak,
  E. Sabbelaan 53, 8500 Kortrijk, Belgium and
  Dept. of Electrical Engineering  ESAT/STADIUS KU Leuven,
  Kasteelpark Arenberg 10, bus 2446, B-3001 Leuven-Heverlee, Belgium
  (\email{ignat.domanov@kuleuven.be}, \email{alwin.stegeman@kuleuven.be}, \email{lieven.delathauwer@kuleuven.be}).}
  \and
  Alwin Stegeman\footnotemark[2]
  \and
  Lieven De Lathauwer\footnotemark[2]
    }
\begin{document}

\maketitle

\begin{abstract}
  Let $\sigma_n$ denote  the largest mode-$n$ multilinear singular value of an $I_1\times\dots \times I_N$  tensor $\mathcal T$.
  We prove that
  \begin{equation}
      \sigma_1^2+\dots+\sigma_{n-1}^2+\sigma_{n+1}^2+\dots+\sigma_{N}^2\leq (N-2)\|\mathcal T\|^2 + \sigma_n^2,\quad n=1,\dots,N,
      \label{eq:theveryfirsteq}
  \end{equation}
where $\|\cdot\|$ denotes the Frobenius norm.
  We also show that at least  for the  cubic tensors the inverse problem always has a solution. Namely, for each
     $\sigma_1,\dots,\sigma_N$ that satisfy \eqref{eq:theveryfirsteq}
       and the trivial inequalities $\sigma_1\geq \frac{1}{\sqrt{I}}\|\mathcal T\|,\dots, \sigma_N\geq \frac{1}{\sqrt{I}}\|\mathcal T\|$,  there always exists an $I\times \dots\times I$ tensor whose largest multilinear singular values are      equal to $\sigma_1,\dots,\sigma_N$.  For $N=3$ we
      show that  if the equality $\sigma_1^2+\sigma_2^2= \|\mathcal T\|^2 + \sigma_3^2$ in \cref{eq:theveryfirsteq} holds, then 
    $\mathcal T$ is necessarily  equal to a sum of  multilinear rank-$(L_1,1,L_1)$  and multilinear rank-$(1,L_2,L_2)$ tensors and we  give a complete description of all its multilinear singular values.  
    We establish a connection with honeycombs and eigenvalues of the sum of two Hermitian matrices. This seems to give at least a partial explanation of why results on the joint distribution of multilinear singular values are scarce.
\end{abstract}

\begin{keywords}
multilinear singular value decomposition, multilinear rank, singular value decomposition, tensor
\end{keywords}

\begin{AMS}
   15A69, 15A23
\end{AMS}

\section{Introduction}
Throughout the paper $\|\cdot\|$ denotes the Frobenius norm of a vector, matrix, or tensor and the  superscripts $^T$, $^H$, and $^*$ denote transpose, hermitian transpose, and conjugation, respectively.
We also use the ``empty sum/product'' convention, i.e., if $m>n$, then $\sum\limits_{m}^n(\cdot) =0$ and  
$\prod\limits_{m}^n(\cdot) =1$.

Let $\mathcal T\in\mathbb C^{I_1\times\dots\times I_N}$. A {\em mode-$n$ fiber} of $\mathcal T$ is a column vector obtained by fixing indices $i_1,\dots,i_{n-1},i_{n+1},\dots,i_N$. A matrix $\unf{T}{n}\in\mathbb C^{I_n\times I_1\cdots I_{n-1}I_{n+1}\dots I_N}$ formed by all mode-$n$ fibers is called a {\em mode-$n$
matrix unfolding} (aka flattening or matricization) of  $\mathcal T$. For notational convenience  we  assume  that the
columns of $\unf{T}{n}$ are ordered  such that 
\begin{equation}\label{eq:mode_n-unfolding}
\text{the } (i_n,1+\sum_{\substack{k=1\\ k\ne n}}^N (i_k-1)\prod_{\substack{l=1\\ l\ne n}}^{k-1}I_l)\text{th entry of }\unf{T}{n} = \text{ the }(i_1,\dots, i_N)\text{th entry of }\mathcal T. 
\end{equation}
For instance, if $N=3$, i.e.,  $\mathcal T\in\mathbb C^{I_1\times I_2\times I_3}$, then
\eqref{eq:mode_n-unfolding} implies that
\begin{align*}
\unf{T}{1} &= [\mathbf T_1\ \dots\ \mathbf T_{I_3}]\in\mathbb C^{I_1\times I_2I_3},\\
\unf{T}{2} &= [\mathbf T_1^T\ \dots\ \mathbf T_{I_3}^T]\in\mathbb C^{I_2\times I_1I_3},\\
\unf{T}{3} &= [\operatorname{vec}(\mathbf T_1)\ \dots \operatorname{vec}(\mathbf T_{I_3})]^T\in\mathbb C^{I_3\times I_1I_2},
\end{align*}
where
$\mathbf T_1,\dots, \mathbf T_{I_3}\in\mathbb C^{I_1\times I_2}$ denote the frontal slices of $\mathcal T$.

Tensor $\mathcal T\in\mathbb C^{I_1\times\dots\times I_N}$ is {\em all-orthogonal} if the matrices $\unf{T}{1}\unf{T}{1}^H,\dots,\unf{T}{N}\unf{T}{N}^H$ are diagonal.
{\em The MultiLinear (ML) Singular Value Decomposition (SVD)} (aka Higher-Order SVD) is a factorization of  $\mathcal T$ into  the product of an all-orthogonal tensor $\mathcal S\in \mathbb C^{I_1\times\dots\times I_N}$ and $N$  unitary matrices $\mathbf U_1\in\mathbb C^{I_1\times I_1},\dots,\mathbf U_N\in\mathbb C^{I_N\times I_N}$,
\begin{equation}
\mathcal T = \mathcal S\nprod{1}\mathbf U_1\nprod{2}\mathbf U_2\dots \nprod{N}\mathbf U_N,\label{eq:HOSVD}
\end{equation}
where ''$\nprod{n}$'' denotes the $n$-mode product of $\mathcal S$ and $\mathbf U_n$.
Rather than giving the formal definition of ''$\nprod{n}$'', for which we refer the reader to \cite{IEEEreview, Koldareview,HOSVD}, we present $N$ equivalent matricized  versions of \eqref{eq:HOSVD}: 
\begin{equation}\label{eq:HOSVDmatrixform}
\unf{T}{n} = \mathbf U_n\unf{S}{n}(\mathbf U_N\otimes\dots\otimes\mathbf U_{n+1}\otimes\mathbf U_{n-1}\otimes\dots\otimes\mathbf U_1)^T,\qquad n=1,\dots,N,
\end{equation}
where ``$\otimes$'' denotes the Kronecker product.
 For $N=2$, i.e., for $\mathcal T=\mathbf T_1\in\mathbb C^{I_1\times I_2}$, the MLSVD  reduces, up to trivial indeterminacies, to the classical SVD of a matrix, $\unf{T}{1}=\mathbf T_1 = \mathbf U\mathbf S\mathbf V^H$, where $\mathbf U=\mathbf U_1$, $\mathbf S=\unf{S}{1}$, and $\mathbf V=\mathbf U_2^*\otimes 1$. 
 It is known \cite{HOSVD} that MLSVD always exists and that its  uniqueness properties are similar to those of the matrix SVD.
 
 The MLSVD has many applications in signal processing, data analysis, and machine learning 
(see, for instance, the overview papers \cite[Subsection 4.4]{Koldareview},\cite{review_Nikos_2016}). 
Here we just mention that as Principal Component Analysis (PCA)  can be done by SVD of a data matrix, MLPCA   can be done by  MLSVD of a data tensor \cite{Tucker1966,Kroonenbergbook,LievenMLPCA}.
 
The singular values of $\unf{T}{n}$, 
are called {\em the mode-$n$ singular values}  of $\mathcal T$. 
Since $\unf{S}{1}\unf{S}{1}^H, \dots,\unf{S}{N}\unf{S}{N}^H$ are diagonal, it follows from 
\eqref{eq:HOSVDmatrixform} that the ML singular values  of $\mathcal T$ coincide with the ML singular values of $\mathcal S$,
which are just the Frobenius norms of the rows of $\unf{S}{1},\dots,\unf{S}{N}$. Throughout  the paper,
$$
\sigma_n\ \text{ denotes the largest singular value of }\unf{T}{n}.
$$
In the matrix case, i.e., for $N=2$, the description of MLSVD is trivial.  Indeed, the singular values of $\unf{T}{1}=\mathbf T_1$ and $\unf{T}{2}=\mathbf T_1^T$ coincide and $\unf{T}{3}=\operatorname{vec}(\mathbf T_1)^T$ has a single singular value $\|\mathcal T\|$. 
Thus, the singular values of $\unf{T}{1}$ completely define the singular values of $\unf{T}{2}$ and $\unf{T}{3}$.
In particular, the set of triplets $(\sigma_1,\sigma_2,\sigma_3)$ coincides with the set $\{(x,x,y):\ y\geq x\geq 0\}\subset\mathbb R^3$ whose Lebesgue measure is zero. The situation for tensors is much more complicated.
It is clear that in the general case $N\geq 2$, the sets of  the mode-$1$,\dots, mode-$N$ singular values  are not independent either.
The study of topological properties  of the set of ML singular values of real tensors has  been initiated only recently in \cite{Uschmajev1} and \cite{Uschmajev2}. In particular, it has been shown in \cite{Uschmajev1} and \cite{Uschmajev2} that, as in the matrix case,
some configurations of ML singular values  are not possible but, nevertheless, at least for $n\times\dots\times n$ tensors
 the set of ML singular values has a positive Lebesgue measure.

In this paper we study  possible configurations for the largest ML singular values, i.e., for $\sigma_1,\dots,\sigma_N$.
Our results are valid for real and complex tensors.
The following theorem presents simple necessary conditions for $\sigma_1$, $\sigma_2$, and $\sigma_3$ to be
the largest ML singular values of a third-order tensor. For instance, it implies that a norm-$1$ tensor whose largest ML singular values are equal to $0.9$, $0.9$, and $0.7$ does not exist.   
\begin{theorem}\label{th:main}
Let 
$\sigma_1$, $\sigma_2$, and $\sigma_3$ denote the  largest  ML singular values of an $I_1\times I_2\times I_3$
tensor $\mathcal T$.
 Then 
 \begin{align}
  \sigma_1^2+\sigma_2^2\leq \|\mathcal T\|^2 + \sigma_3^2,\quad
 &\sigma_1^2+\sigma_3^2\leq \|\mathcal T\|^2 + \sigma_2^2,\quad
 \sigma_2^2+\sigma_3^2\leq \|\mathcal T\|^2 + \sigma_1^2,\label{eqeqs}\\
 \sigma_1\geq \frac{1}{\sqrt{I_1}}\|\mathcal T\|,\ 
 &\sigma_2\geq \frac{1}{\sqrt{I_2}}\|\mathcal T\|,\ 
 \sigma_3\geq \frac{1}{\sqrt{I_3}}\|\mathcal T\|.\label{eqeqstrivial}
  \end{align}
\end{theorem}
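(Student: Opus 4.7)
The plan is to reduce to an all-orthogonal core tensor and then exploit a simple two-set inclusion--exclusion identity combined with the elementary fact that any column of a matrix has Euclidean norm at most the operator norm of that matrix.

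Since the MLSVD \eqref{eq:HOSVD} writes $\mathcal T$ as the image of $\mathcal S$ under unitary mode products, and such products preserve the singular values of every unfolding, it suffices to prove \eqref{eqeqs} under the hypothesis that $\mathcal T$ itself is all-orthogonal; denote its entries by $s_{ijk}$. Furthermore, by right-multiplying each $\mathbf U_n$ in \eqref{eq:HOSVD} by a suitable permutation matrix and correspondingly permuting the mode-$n$ slices, one may assume that, for every $n\in\{1,2,3\}$, the first row of $\unf{T}{n}$ has the largest norm among all rows of $\unf{T}{n}$. Under all-orthogonality the rows of $\unf{T}{n}$ are mutually orthogonal, so this largest row norm equals $\sigma_n$ and
\begin{equation*}
\sigma_1^2 = \sum_{j,k} |s_{1jk}|^2, \qquad \sigma_2^2 = \sum_{i,k} |s_{i1k}|^2, \qquad \sigma_3^2 = \sum_{i,j} |s_{ij1}|^2.
\end{equation*}

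Now set $A=\{(i,j,k):i=1\}$ and $B=\{(i,j,k):j=1\}$, so that $A\cap B=\{(1,1,k):1\leq k\leq I_3\}$. Summing $|s_{ijk}|^2$ over the relevant index sets and using the identity $|A|+|B|=|A\cup B|+|A\cap B|$ gives
\begin{equation*}
\sigma_1^2+\sigma_2^2 \;=\; \sum_{(i,j,k)\in A\cup B}|s_{ijk}|^2 \;+\; \sum_{k=1}^{I_3}|s_{11k}|^2 \;\leq\; \|\mathcal T\|^2 \;+\; \sum_{k=1}^{I_3}|s_{11k}|^2.
\end{equation*}
The residual sum is exactly the squared Euclidean norm of the $(1,1)$-column of $\unf{T}{3}$; since $\sigma_3$ is the operator norm of $\unf{T}{3}$, evaluating $\unf{T}{3}$ on the standard basis vector indexed by $(1,1)$ gives $\sum_k|s_{11k}|^2\leq\sigma_3^2$. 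This yields the first inequality of \eqref{eqeqs}, and the other two inequalities follow by cyclic permutation of the three modes.

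The bounds \eqref{eqeqstrivial} are immediate: $\|\mathcal T\|^2=\|\unf{T}{n}\|^2$ is the sum of at most $I_n$ squared singular values of $\unf{T}{n}$, each at most $\sigma_n^2$, whence $\|\mathcal T\|^2\leq I_n\sigma_n^2$. I do not foresee a genuine obstacle; the only step that needs a moment of care is the simultaneous sorting at the start, which is legitimate because the permutation indeterminacies in \eqref{eq:HOSVD} act \emph{independently} on each of the three factor matrices.
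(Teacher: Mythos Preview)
Your argument is correct, and it is genuinely different from the paper's. The paper does \emph{not} pass to the all-orthogonal core; instead it proves a block-matrix eigenvalue inequality (Lemma~\ref{lemma:main}) for an arbitrary positive semidefinite $\newA=(\newA_{ij})$, namely
\[
\lambda_{\max}\Bigl(\sum_k \newA_{kk}\Bigr)+\lambda_{\max}(\newA)\;\le\;\operatorname{tr}(\newA)+\lambda_{\max}(\mathbf\Phi(\newA)),
\]
and then specialises to $\newA=\unf{T}{2}^T\unf{T}{2}^*$, identifying the four terms with $\sigma_1^2,\sigma_2^2,\|\mathcal T\|^2,\sigma_3^2$. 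Your route is considerably more elementary: once one has reduced to the core and sorted the slices, the inequality becomes a one-line inclusion--exclusion plus the bound $\|\unf{T}{3}e\|\le\sigma_3$ for a standard basis vector $e$. What the paper's more elaborate approach buys is a clean handle on the equality case: tracking equality through the proof of Lemma~\ref{lemma:main} yields the structured factorisation \eqref{eq:temp1}, which is exactly what drives Theorems~\ref{th:1LL and L1Lnew} and~\ref{th:1LL and L1L}. Your argument would also give an equality characterisation (namely $s_{ijk}=0$ whenever $i\ne1$ and $j\ne1$, together with $\sum_k|s_{11k}|^2=\sigma_3^2$), but translating that into the block-term decomposition and the Horn-type description would require rebuilding much of what Lemma~\ref{lemma:1from temp} packages.
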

Figure \ref{figure} shows four  typical shapes of the set 
$\{(\sigma_1^2,\sigma_2^2,\sigma_3^2):\ \sigma_1,\ \sigma_2,\ \sigma_3\ \text{satisfy \eqref{eqeqs}--\eqref{eqeqstrivial}}\}$
(WLOG, we assumed  that $I_1\leq I_2\leq I_3$).
\begin{figure}[tbhp]
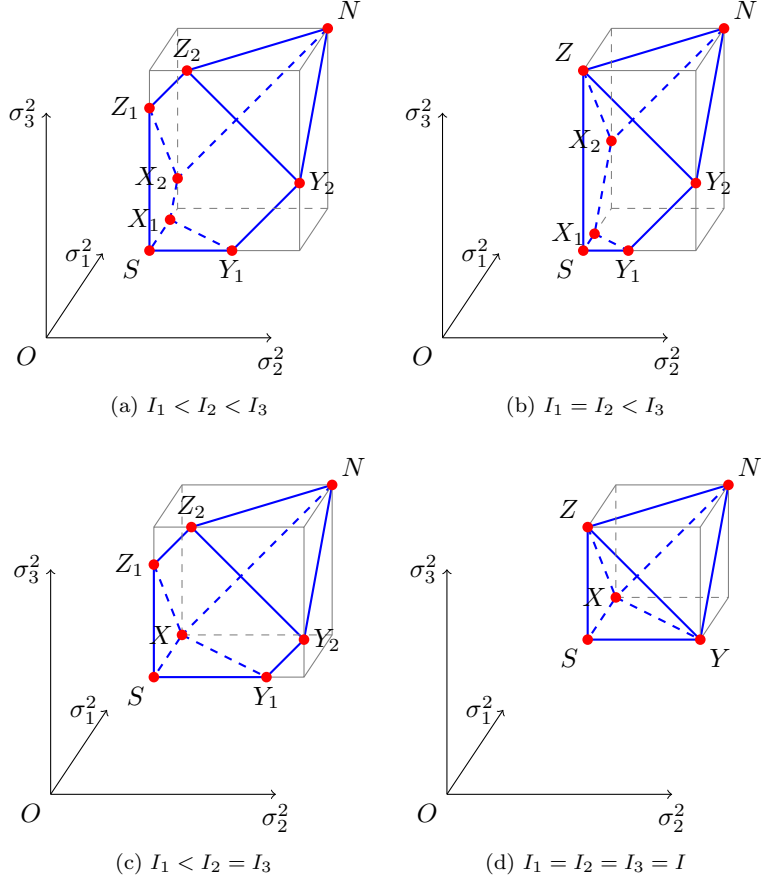

\centering
\subfloat[$I_1<I_2<I_3$]{\label{fig:a}
   \DrawSetOfSolutions{2}{3}{5}{0.23}
}
\subfloat[$I_1=I_2<I_3$]{\label{fig:b}
   \DrawSetOfSolutions{2}{2}{5}{0.23}
}\\
\subfloat[$I_1<I_2=I_3$]{\label{fig:c}
   \DrawSetOfSolutions{2}{3}{3}{0.23}
}
\subfloat[$I_1=I_2=I_3=I$]{\label{fig:d} 
\DrawSetOfSolutions{2}{2}{2}{0.23}}
\caption{The  typical shapes of the set $\{(\sigma_1^2,\sigma_2^2,\sigma_3^2):\ \sigma_1,\ \sigma_2,\ \sigma_3\ \text{satisfy \eqref{eqeqs}--\eqref{eqeqstrivial}}\}$ for $I_1\leq I_2\leq I_3$ (drawn for  $I_1=2$, $I_2=3$, $I_3=5$ and $\|\mathcal T\|=1$). Plot (a) is the case where all dimensions of a tensor are distinct. The points $S$, $X_1$, $X_2$, $Y_1$, $Y_2$, $Z_1$, $Z_2$ and $N$ have coordinates 
$(\frac{1}{I_1},\frac{1}{I_2},\frac{1}{I_3})$, 
$(1-\frac{1}{I_2}+\frac{1}{I_3},\frac{1}{I_2},\frac{1}{I_3})$, 
$(1,\frac{1}{I_2},\frac{1}{I_2})$, 
$(\frac{1}{I_1},1-\frac{1}{I_1}+\frac{1}{I_3},\frac{1}{I_3})$, 
$(\frac{1}{I_1},1,\frac{1}{I_1})$, 
$(\frac{1}{I_1},\frac{1}{I_2},1-\frac{1}{I_1}+\frac{1}{I_2})$, 
$(\frac{1}{I_1},\frac{1}{I_1},1)$, 
  and $(1,1,1)$, 
 respectively.
 Plots  (b)--(c) are the  cases where a tensor has exactly two equal dimensions, the points $Z_1$ and $Z_2$ were merged into one point $Z$  and the points $X_1$ and $X_2$ were merged into one point $X$. Plot (d) is the case where all three dimensions of a tensor are equal to each other, $I_1=I_2=I_3=I$. In this case, the points $Y_1$ and $Y_2$ were merged into one point $Y$, so
 $S$, $X$, $Y$, and $Z$ have the  coordinates $(\frac{1}{I},\frac{1}{I},\frac{1}{I})$, $(1,\frac{1}{I},\frac{1}{I})$, $(\frac{1}{I},1,\frac{1}{I})$, and $(\frac{1}{I},\frac{1}{I},1)$, respectively.
  By Corollary \ref{th:2}, any point $(\sigma_1^2,\sigma_2^2,\sigma_3^2)$ of the polyhedron $SXYZN$ in plot (d)  is feasible, i.e., there exists a norm-$1$ tensor $\mathcal T\in\mathbb C^{I\times I\times I}$ whose  squared largest multilinear singular values are $\sigma_1^2$, $\sigma_2^2$, and $\sigma_3^2$. The volume of $SXYZN$ equals half of the volume of the cube, i.e., $\frac{1}{2}(1-\frac{1}{I})^3$.
  }
\label{figure}
\end{figure}

One can easily verify that if  
$\sigma_1$, $\sigma_2$ and $\sigma_3$ satisfy \eqref{eqeqs}--\eqref{eqeqstrivial} for $I_1=I_2=I_3=2$ and $\|\mathcal T\|=1$,
then $\sigma_1$, $\sigma_2$ and $\sigma_3$ are  the largest ML singular values of the $2\times 2\times 2$ tensor $\mathcal T$ with  mode-$1$ matrix
unfolding
\begin{align*}
\unf{T}{1} &= [\mathbf T_1\ \mathbf T_2]=
\begin{bmatrix}
\frac{\sqrt{\sigma_1^2+\sigma_2^2+\sigma_3^2-1}}{\sqrt{2}}   & 0 & 0  &  \frac{\sqrt{1+\sigma_1^2-\sigma_2^2-\sigma_3^2}}{\sqrt{2}}\\
      0   & \frac{\sqrt{1+\sigma_3^2-\sigma_1^2-\sigma_2^2}}{\sqrt{2}} & \frac{\sqrt{1+\sigma_2^2-\sigma_1^2-\sigma_3^2}}{\sqrt{2}} & 0
\end{bmatrix}
.
\end{align*}
The proof of the following result relies on a similar explicit construction of an $I_1\times I_2\times I_3$ tensor $\mathcal T$.
\begin{theorem}\label{th:2nonsquare}
Let $I_1\leq I_2\leq I_3 $ and  $\sigma_1$, $\sigma_2$, $\sigma_3$ satisfy  \eqref{eqeqs} and the following three inequalities
\begin{align}
\sigma_1&\geq \frac{1}{\sqrt{I_1}}\|\mathcal T\|,\label{eq:restr1}\\
(I_2-I_1)\sigma_1^2 + (I_1I_2 -I_2)\sigma_3^2+(1-I_2)\|\mathcal T\|^2&\geq 0,\label{eq:restr2}\\
(I_2-I_1)\sigma_1^2 + (I_1I_2 -I_2)\sigma_2^2+(1-I_2)\|\mathcal T\|^2&\geq 0.\label{eq:restr3}
\end{align}
 Then there exists an 
$I_1\times I_2\times I_3$ tensor $\mathcal T$ such that
\begin{enumerate}
\item all entries of $\mathcal T$ are non-negative;
\item $\mathcal T$ is all-orthogonal;
\item the largest ML singular values of  $\mathcal T$ are equal to $\sigma_1$, $\sigma_2$ and $\sigma_3$. 
 \end{enumerate}
\end{theorem}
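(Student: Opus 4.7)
The plan is to mimic and extend the explicit four-entry $2\times 2\times 2$ construction displayed just above the theorem. I would place non-negative entries on a support $P\subset[I_1]\times[I_2]\times[I_3]$ in which any two distinct triples differ in at least two coordinates. For such a ``minimum Hamming distance $2$'' support with non-negative entries, each off-diagonal entry of the Gram matrix $\unf{T}{n}\unf{T}{n}^H$ is a sum of non-negative products each of which is forced to vanish by the Hamming condition; the resulting tensor is therefore automatically non-negative and all-orthogonal, and its mode-$n$ singular values can be read off directly from the support and the scalar weights.

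Concretely, I would parameterize the construction by a non-negative matrix $M\in\mathbb R_{\ge 0}^{I_1\times I_2}$ and an edge-coloring $c\colon\operatorname{supp}(M)\to[I_3]$ of the bipartite graph $\operatorname{supp}(M)\subset K_{I_1,I_2}$ whose color classes are matchings, and set $T_{i,j,c(i,j)}=\sqrt{M_{i,j}}$ with the remaining entries zero. A direct check then shows that $\|\mathcal T\|^2=\sum_{i,j}M_{i,j}$, that $\sigma_1^2$ equals the largest row sum of $M$, $\sigma_2^2$ the largest column sum, and $\sigma_3^2$ the largest color-class sum; and since $\operatorname{supp}(M)$ can always be chosen to have maximum degree at most $I_1\le I_3$, K\"onig's edge-coloring theorem guarantees a coloring into at most $I_3$ matchings. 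The theorem thus reduces to producing a concrete $M$ together with a coloring whose four mass statistics realize $(\|\mathcal T\|^2,\sigma_1^2,\sigma_2^2,\sigma_3^2)$. For this I would write down a piecewise closed-form $M$ generalizing the four square-root expressions $\alpha,\beta,\gamma,\delta$ of the $2\times 2\times 2$ example, combined with a roughly uniform ``tail'' of mass distributed over the extra $I_2-I_1$ columns and $I_3-I_1$ slices, together with a cyclic-shift coloring such as $c(i,j)\equiv i+j-1\pmod{I_3}$.

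The three symmetric inequalities \eqref{eqeqs} are exactly what makes the square-root analogues of $\alpha,\beta,\gamma,\delta$ non-negative, condition \eqref{eq:restr1} is the trivial feasibility $\|\mathcal T\|^2\le I_1\sigma_1^2$, and the asymmetric conditions \eqref{eq:restr2}--\eqref{eq:restr3} are precisely what permits a single matching of size at most $I_1$ to carry mass $\sigma_3^2$ (respectively a single column of $M$ to carry mass $\sigma_2^2$) while the remaining $I_3-1$ matchings (respectively $I_2-1$ columns) redistribute the leftover mass $\|\mathcal T\|^2-\sigma_1^2$ with each row contribution bounded by $\sigma_1^2$. The main obstacle will be finding a single parameterization of $M$ that is valid throughout the polyhedral region cut out by \eqref{eqeqs}--\eqref{eq:restr3}: the asymmetric conditions rule out the naive strategy of embedding an $I_1\times I_1\times I_1$ cubic all-orthogonal tensor into the $I_1\times I_2\times I_3$ array, so a genuine tail must really be distributed in a balanced way that simultaneously respects all three largest-value budgets. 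Once this is in place, the remaining verifications---all-orthogonality from direct computation of $\unf{T}{n}\unf{T}{n}^H$ and identification of $\sigma_n$ from the row, column, and color-class sums of $M$---are mechanical.
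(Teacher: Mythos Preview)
Your structural framework is exactly the one the paper uses. The paper's support condition ``$j=\pi^{k-1}(i)$ for $1\le i,k\le I_1$, or $i=1$ and $I_1<j=k\le I_2$'' is precisely a cyclic-shift edge-coloring of a bipartite support in $[I_1]\times[I_2]$, and the observation that every column of every unfolding then contains at most one nonzero entry is your Hamming-distance-$2$ remark. (Minor slip: the maximum degree of the full bipartite graph $K_{I_1,I_2}$ is $I_2$, not $I_1$; but $I_2\le I_3$, so K\"onig still gives you enough colors.)

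Where your plan and the paper diverge is in how to fill in the weights. You propose to write down a piecewise closed form for $M$ directly and flag this as ``the main obstacle''. The paper sidesteps this obstacle with a convexity trick you are not exploiting: it constructs five concrete all-orthogonal tensors $\mathcal S_2,\mathcal X_2,\mathcal Y_2,\mathcal Z_2,\mathcal N$ sitting at the five vertices $S_2,X_2,Y_2,Z_2,N$ of the target polyhedron, all sharing the \emph{same} cyclic support, and then sets
\[
\mathcal T=\bigl(t_{S_2}\mathcal S_2^{\,2}+t_{X_2}\mathcal X_2^{\,2}+t_{Y_2}\mathcal Y_2^{\,2}+t_{Z_2}\mathcal Z_2^{\,2}+t_N\mathcal N^{\,2}\bigr)^{1/2}
\]
entrywise. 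In your language this is simply $M=\sum t_\bullet M_\bullet$ with the coloring held fixed, so the squared largest singular values depend affinely on the barycentric weights; hence any point of the convex hull is realized. The paper then reads off the $t_\bullet$'s in closed form, splitting into the two simplices $X_2Y_2Z_2N$ and $X_2Y_2Z_2S_2$, which is exactly the ``piecewise'' parameterization you were looking for. Conditions \eqref{eqeqs} and \eqref{eq:restr1}--\eqref{eq:restr3} are what make the five barycentric coordinates non-negative.

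So your approach is correct and essentially the same route; what you are missing is the realization that once the support and coloring are fixed, the map $(\text{weights})\mapsto(\sigma_1^2,\sigma_2^2,\sigma_3^2)$ is affine, which turns your ``main obstacle'' into a routine convex-hull computation over five explicitly constructed vertex tensors.
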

Conditions \eqref{eqeqstrivial} and  \eqref{eq:restr1}--\eqref{eq:restr3} mean that the point $(\sigma_1^2,\sigma_2^2,\sigma_3^2)$ belongs to the trihedral angle $SX_1Y_1Z_1$ and $S_2X_2Y_2Z_2$, respectively, where $S_2$ has coordinates $(\frac{1}{I_1},\frac{1}{I_1},\frac{1}{I_1})$.
The gap between the  necessary conditions in Theorem \ref{th:main} and the  sufficient conditions in Theorem \ref{th:2nonsquare},
i.e., the set
\begin{equation} 
\{(\sigma_1^2,\sigma_2^2,\sigma_3^2):\  \text{\eqref{eqeqs}--\eqref{eqeqstrivial} hold and  at least one of \eqref{eq:restr1}--\eqref{eq:restr3}
 does not hold}\},\label{eq:gap}
\end{equation}
is shown in Figure \ref{fig2:c}. One can easily verify that the gap is empty  only for $I_1=I_2=I_3$.
\begin{figure}[tbhp]
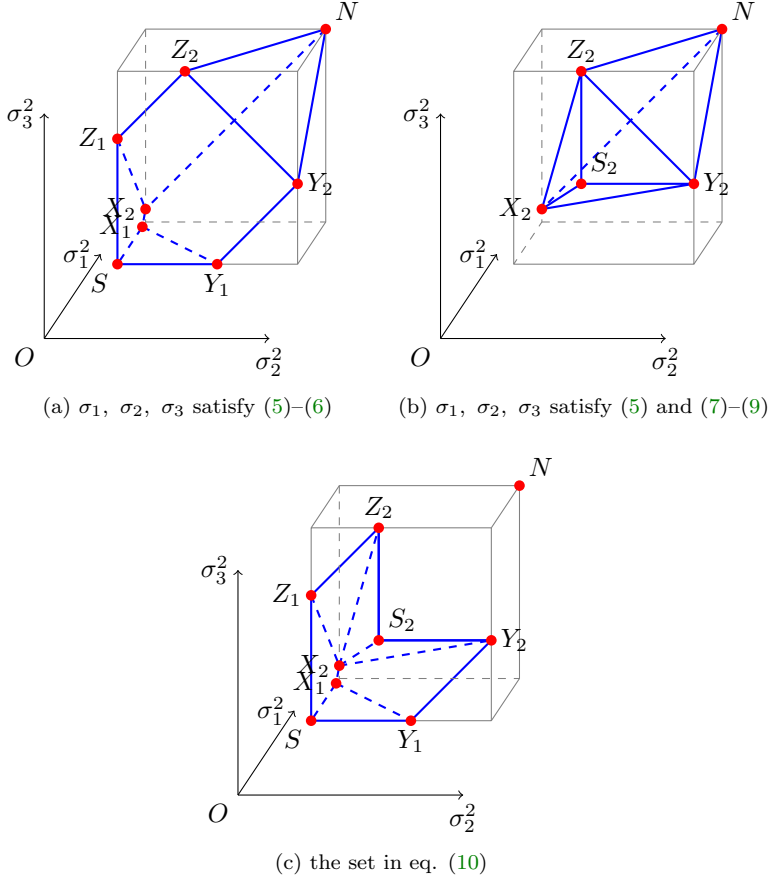

\centering
\subfloat[$\sigma_1,\ \sigma_2,\ \sigma_3\ \text{satisfy \eqref{eqeqs}--\eqref{eqeqstrivial}}$]{\label{fig2:a}
   \DrawSetOfSolutions{2}{5}{7}{0.23}
}
\subfloat[$\sigma_1,\ \sigma_2,\ \sigma_3\ \text{satisfy \eqref{eqeqs} and \eqref{eq:restr1}--\eqref{eq:restr3}}$ ]{\label{fig2:b}
   \DrawSetOfSolutionsproved{2}{5}{7}{0.23}
}
\\
\subfloat[the set in eq. \eqref{eq:gap}]{\label{fig2:c}
 \Drawdifference{2}{5}{7}{0.23} 
}
   \label{fig2}
\caption{Gap between the  necessary conditions in Theorem \ref{th:main} and the  sufficient conditions in Theorem \ref{th:2nonsquare} for $I_1<I_2<I_3$ (drawn for  $I_1=2$, $I_2=5$, $I_3=7$ and $\|\mathcal T\|=1$). The point $S_2$ has coordinates 
$(\frac{1}{I_1},\frac{1}{I_1},\frac{1}{I_1})$. The set in  plot (c) is the difference of the set in  plot (a) and the set in plot (b).}
\end{figure}

\begin{corollary}\label{th:2}
Let  $\sigma_1$, $\sigma_2$ and $\sigma_3$ satisfy  \eqref{eqeqs}--\eqref{eqeqstrivial} for $I_1=I_2=I_3=I\geq 2$. Then there exists an 
$I\times I\times I$ tensor $\mathcal T$ such that
\begin{enumerate}
\item all entries of $\mathcal T$ are non-negative;
\item $\mathcal T$ is all-orthogonal;
\item the largest ML singular values of  $\mathcal T$ are equal to $\sigma_1$, $\sigma_2$ and $\sigma_3$. 
 \end{enumerate}
\end{corollary}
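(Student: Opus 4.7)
The plan is to derive this directly from Theorem \ref{th:2nonsquare} by specializing to the cubic case $I_1 = I_2 = I_3 = I$ and verifying that the extra hypotheses \eqref{eq:restr1}--\eqref{eq:restr3}, which in general represent a genuine gap beyond the necessary conditions, collapse to inequalities already contained in \eqref{eqeqstrivial}. Hence no new construction is needed; the corollary is purely a bookkeeping exercise after the main theorem.

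First I would observe that \eqref{eq:restr1} reads $\sigma_1 \geq \tfrac{1}{\sqrt{I_1}}\|\mathcal T\|$, which for $I_1 = I$ is literally the first inequality of \eqref{eqeqstrivial}. Next, substituting $I_1 = I_2 = I$ in \eqref{eq:restr2} I would rewrite the left-hand side as
\begin{equation*}
(I_2 - I_1)\sigma_1^2 + (I_1 I_2 - I_2)\sigma_3^2 + (1 - I_2)\|\mathcal T\|^2 = (I-1)\bigl(I\sigma_3^2 - \|\mathcal T\|^2\bigr).
\end{equation*}
Since $I \geq 2$, the factor $I-1$ is strictly positive, so \eqref{eq:restr2} reduces to $\sigma_3 \geq \tfrac{1}{\sqrt{I}}\|\mathcal T\|$, which is the third inequality of \eqref{eqeqstrivial}. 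An identical computation shows that \eqref{eq:restr3} reduces to $\sigma_2 \geq \tfrac{1}{\sqrt{I}}\|\mathcal T\|$, the second inequality of \eqref{eqeqstrivial}.

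Having established these three reductions, the proof concludes by noting that the hypotheses of the corollary (namely \eqref{eqeqs} together with \eqref{eqeqstrivial}) imply all the hypotheses of Theorem \ref{th:2nonsquare}, and invoking that theorem produces the desired all-orthogonal, entrywise non-negative tensor $\mathcal T \in \mathbb C^{I\times I\times I}$ with prescribed largest ML singular values $\sigma_1, \sigma_2, \sigma_3$. There is no real obstacle here; the only slightly non-trivial point is the sign analysis that turns \eqref{eq:restr2} and \eqref{eq:restr3} from quadratic inequalities into the trivial lower bounds in \eqref{eqeqstrivial}, and this is exactly the content of the remark (illustrated in Figure \ref{fig2:c}) that the gap between Theorem \ref{th:main} and Theorem \ref{th:2nonsquare} vanishes precisely when $I_1 = I_2 = I_3$.
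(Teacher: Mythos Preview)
Your proposal is correct and matches the paper's approach: the corollary is stated immediately after the remark that the gap \eqref{eq:gap} is empty precisely when $I_1=I_2=I_3$, and your computation is exactly the verification of that claim. The paper gives no separate proof of the corollary beyond this observation, so you have simply made explicit what the paper leaves to the reader.
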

Thus, the conditions in Theorem \ref{th:main} are not only necessary but also sufficient for $\sigma_1$, $\sigma_2$, and $\sigma_3$ to be feasible  largest ML singular values of a cubic third-order tensor. Figure \ref{fig:d} shows the set of feasible triplets $(\sigma_1^2,\sigma_2^2,\sigma_3^2)$ of an $I\times I\times I$ tensor.

We do not have a complete view on the  feasibility of points in \eqref{eq:gap}. 
In Section \ref{SX1Y1Z1} we obtain particular results on the  (non)feasibility of the  points $S(\frac{1}{I_1},\frac{1}{I_2}, \frac{1}{I_3})$, 
$X_1(1-\frac{1}{I_2}+\frac{1}{I_3},\frac{1}{I_2},\frac{1}{I_3})$, and
 $Y_1(\frac{1}{I_1},1-\frac{1}{I_1}+\frac{1}{I_3},\frac{1}{I_3})$. 
Namely, we show that if $I_1<I_2$ and $I_3=I_1I_2-1$, then the
 point $S$ is not feasible and  if $I_3=I_1I_2$, then  the point $S$ is feasible but the points 
 $X_1$ and $Y_1$ not. 
 
It worth mentioning a link with  scaled all-orthonormal tensors introduced recently in \cite{increadible_HOLQ}. 
Tensor $\mathcal T\in\mathbb C^{I_1\times\dots\times I_N}$ is {\em scaled all-orthonormal} \cite[Definition 2]{increadible_HOLQ} if at least $N-1$  of the $N$ matrices $\unf{T}{1}\unf{T}{1}^H,\dots,\unf{T}{N}\unf{T}{N}^H$ are multiples of the identity matrix. It is clear that
if the largest mode-$n$ singular value of a norm-$1$ tensor is $\frac{1}{\sqrt{I_n}}$, then all mode-$n$ singular  values are also  $\frac{1}{\sqrt{I_n}}$. Thus,  feasibility of a point
belonging to the segment $SX_1$ (resp. $SY_1$ or $SZ_1$)
is equivalent to the existence of a norm-$1$ $I_1\times I_2\times I_3$ tensor $\mathcal T$ such that
\begin{equation*}
\begin{split}
&\unf{T}{2}\unf{T}{2}^H = \frac{1}{I_2}\mathbf I_{I_2},\ 
\unf{T}{3}\unf{T}{3}^H = \frac{1}{I_3}\mathbf I_{I_3} \\
\ (\text{resp. } \unf{T}{1}\unf{T}{1}^H = \frac{1}{I_1}\mathbf I_{I_1},\  
&\unf{T}{3}\unf{T}{3}^H = \frac{1}{I_3}\mathbf I_{I_3}   \text{ or } \unf{T}{1}\unf{T}{1}^H = \frac{1}{I_1}\mathbf I_{I_1},\  
\unf{T}{2}\unf{T}{2}^H = \frac{1}{I_2}\mathbf I_{I_2}),
\end{split}
\end{equation*}
i.e., to the existence of a scaled all-orthonormal tensor $\mathcal T$.

The following results  generalize  Theorem \ref{th:main} and Corollary  \ref{th:2} for $N$th-order tensors. 
\begin{theorem}\label{th:main2}
Let $\sigma_1,\dots,\sigma_N$ denote the  largest ML  singular values of an $I_1\times\dots \times I_N$
tensor $\mathcal T$.
 Then 
 \begin{gather}
 \sigma_1^2+\dots+\sigma_{n-1}^2+\sigma_{n+1}^2+\dots+\sigma_{N}^2\leq (N-2)\|\mathcal T\|^2 + \sigma_n^2,\quad n=1,\dots,N,\label{eqeqsHO}\\
\|\mathcal T\|\geq \sigma_1\geq \frac{1}{\sqrt{I_1}}\|\mathcal T\|,\dots,\|\mathcal T\|\geq  \sigma_N\geq \frac{1}{\sqrt{I_N}}\|\mathcal T\|.\label{eqeqsHOobv}
  \end{gather}
\end{theorem}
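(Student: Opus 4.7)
The plan is to generalize the double-counting argument that implicitly underlies Theorem~\ref{th:main}. First, I would reduce to the all-orthogonal case: multiplying $\mathcal{T}$ in each mode by a unitary matrix preserves the Frobenius norm and every mode-$n$ singular value, so the MLSVD lets us replace $\mathcal{T}$ by its all-orthogonal core without changing anything in~\eqref{eqeqsHO}--\eqref{eqeqsHOobv}. After an additional index permutation in each mode we may assume that, for every $k$, the mode-$k$ slice obtained by fixing the $k$th index to $1$ has the largest Frobenius norm among all mode-$k$ slices, and that this norm equals $\sigma_k$. Indeed, for an all-orthogonal tensor $\unf{T}{k}\unf{T}{k}^H$ is diagonal, so the squared singular values of $\unf{T}{k}$ are exactly the squared Frobenius norms of its rows, i.e., of the mode-$k$ slices of $\mathcal{T}$.

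The trivial bounds in~\eqref{eqeqsHOobv} follow from two standard matrix facts applied to $\unf{T}{n}$: the spectral norm is dominated by the Frobenius norm (giving $\sigma_n \leq \|\mathcal{T}\|$), while $\unf{T}{n}$ has only $I_n$ rows and the sum of its squared singular values equals $\|\mathcal{T}\|^2$, forcing the largest one to be at least $\|\mathcal{T}\|^2/I_n$.

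For the main inequality~\eqref{eqeqsHO}, fix $n$ and work with $\mathcal{T}$ all-orthogonal as above. Then each $\sigma_k^2$ (for $k\neq n$) equals $\sum_{\mathbf{i}:\,i_k=1}|t_{\mathbf{i}}|^2$. Summing over $k\neq n$ rewrites the left-hand side as $\sum_{\mathbf{i}} w(\mathbf{i})\,|t_{\mathbf{i}}|^2$, where $w(\mathbf{i})=|\{k\neq n:i_k=1\}|$. The observation that makes everything collapse is the elementary pointwise bound
\[
w(\mathbf{i}) \ \leq\ (N-2) + \mathbf{1}[\,i_k=1\text{ for every }k\neq n\,],
\]
valid simply because $w(\mathbf{i})\leq N-1$, with equality forcing $i_k=1$ for all $k\neq n$. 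Summing with weights $|t_{\mathbf{i}}|^2$ yields
\[
\sum_{k\neq n}\sigma_k^2 \ \leq\ (N-2)\,\|\mathcal{T}\|^2 \ +\ \sum_{i_n=1}^{I_n}|t_{1,\ldots,1,i_n,1,\ldots,1}|^2.
\]
To close, I would recognize the last sum as the squared Euclidean norm of a single mode-$n$ fiber of $\mathcal{T}$, hence of a single column of $\unf{T}{n}$; since any column norm of a matrix is bounded by its spectral norm, this is at most $\sigma_n^2$.

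I do not anticipate a serious obstacle. The only points that require mild care are the permutation reduction that identifies $\sigma_k$ with $\|\mathcal{T}_{k,1}\|$ and the clean split between the ``generic'' multiplicity $N-2$ and the single exceptional mode-$n$ fiber. The argument specializes to the three inequalities of Theorem~\ref{th:main} when $N=3$, which is a reassuring consistency check.
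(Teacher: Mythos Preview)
Your argument is correct. After passing to the all-orthogonal core, the identification $\sigma_k^2=\sum_{\mathbf i:\,i_k=1}|t_{\mathbf i}|^2$ is exactly the statement that $\unf{T}{k}\unf{T}{k}^H$ is diagonal, and the simultaneous index permutations in different modes do not interfere with one another since a mode-$k$ permutation only rearranges entries \emph{within} each row of $\unf{T}{k'}$ for $k'\neq k$. The pointwise bound $w(\mathbf i)\le (N-2)+\mathbf 1[\,i_k=1\ \forall k\neq n\,]$ and the final column-versus-spectral-norm estimate are both clean.

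This is a genuinely different route from the paper. The paper does not pass to the core; instead it proves the third-order case via a matrix eigenvalue inequality (Lemma~\ref{lemma:main}) and then reduces the $N$th-order statement to $N-2$ applications of that case by reshaping $\mathcal T$ into third-order tensors $\mathcal T^{[1]},\dots,\mathcal T^{[N-2]}$ and telescoping the resulting chain of inequalities. Your approach is considerably more elementary and in fact gives a shorter proof of Theorem~\ref{th:main} itself, bypassing Lemma~\ref{lemma:main} entirely. What the paper's route buys is that Lemma~\ref{lemma:main} is reused (as Lemma~\ref{lemma:1from temp}) to analyse the equality case $\sigma_1^2+\sigma_2^2=\|\mathcal T\|^2+\sigma_3^2$ and derive the structural Theorems~\ref{th:1LL and L1Lnew} and~\ref{th:1LL and L1L}; your double-counting argument would need its own equality analysis (tracking when $w(\mathbf i)<N-2$ forces $t_{\mathbf i}=0$, and when the distinguished fiber realises the spectral norm) to recover that.
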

\begin{theorem}\label{th:2Nthorder}
Let  $\sigma_1, \dots, \sigma_N$ satisfy  \eqref{eqeqsHO}--\eqref{eqeqsHOobv} for $I_1=\dots=I_N=I\geq 2$. Then there exists an 
$I\times \dots\times I$ tensor $\mathcal T$ such that
\begin{enumerate}
\item all entries of $\mathcal T$ are non-negative;
\item $\mathcal T$ is all-orthogonal;
\item the largest ML singular values of  $\mathcal T$ are equal to $\sigma_1,\dots,\sigma_N$. 
 \end{enumerate}
\end{theorem}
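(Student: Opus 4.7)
The plan is to argue by induction on $N$, using Corollary~\ref{th:2} as the base case $N=3$. By the scale-invariance of \eqref{eqeqsHO}--\eqref{eqeqsHOobv} it suffices to treat $\|\mathcal T\|=1$, and by the symmetry of those inequalities in the mode labels we may further assume, without loss of generality, that $\sigma_{N+1}=\min_n\sigma_n$.

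For the inductive step from $N$ to $N+1$, the idea is to build $\mathcal T$ slice-by-slice along mode $N+1$,
\[
\mathcal T_{i_1,\ldots,i_N,k}=(\mathcal A_k)_{i_1,\ldots,i_N},\qquad k=1,\ldots,I,
\]
where $\mathcal A_1,\ldots,\mathcal A_I$ are $N$-th order $I\times\cdots\times I$ tensors to be furnished by the inductive hypothesis.  Under this representation $\unf{T}{N+1}\unf{T}{N+1}^H$ equals the Gram matrix of $\{\mathcal A_k\}$, while $\unf{T}{n}\unf{T}{n}^H=\sum_{k=1}^I\unf{A_k}{n}\unf{A_k}{n}^H$ for every $n\leq N$.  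I would enforce all-orthogonality of $\mathcal T$ by choosing the $\mathcal A_k$ with pairwise disjoint supports (e.g.\ housed in disjoint ``slabs'' of $\{1,\ldots,I\}^N$, which makes mutual orthogonality automatic and preserves non-negativity) and each $\mathcal A_k$ itself all-orthogonal.  Setting $\rho_1^2:=\sigma_{N+1}^2$ and $\rho_2^2=\cdots=\rho_I^2=(1-\sigma_{N+1}^2)/(I-1)$ yields $\max_k\rho_k=\sigma_{N+1}$ and $\sum_k\rho_k^2=1$, which produces the correct mode-$(N+1)$ largest singular value; the remaining parameters are the largest mode-$n$ singular values $\sigma_n^{(k)}$ of the $\mathcal A_k$ (and the row indices where they are attained).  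Because one can either \emph{align} the argmax rows across $k$ (so the contributions add) or \emph{separate} them (so one only keeps the pointwise maximum), there is enough flexibility in the mode-$n$ Gram matrix of $\mathcal T$ to tune its largest eigenvalue precisely to $\sigma_n^2$ for every $n\leq N$.

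The main obstacle is the compatibility step: each triple $(\rho_k;\sigma_1^{(k)},\ldots,\sigma_N^{(k)})$ must lie in the order-$N$ polyhedron \eqref{eqeqsHO}--\eqref{eqeqsHOobv}, but the order-$(N+1)$ hypothesis on $(\sigma_1,\ldots,\sigma_{N+1})$ is in general strictly weaker than the order-$N$ inequalities on the truncated tuple $(\sigma_1,\ldots,\sigma_N)$, so naive equidistribution $(\sigma_n^{(k)})^2=\sigma_n^2\rho_k^2$ can fail.  A workable choice is to let $\mathcal A_1$ absorb the ``concentrated'' part of the mass by taking $\sigma_n^{(1)}=\rho_1$ for every $n\leq N$ (so that $\mathcal A_1$ is a scaled rank-one tensor, which trivially satisfies every order-$N$ constraint), and to distribute the residual uniformly among $\mathcal A_2,\ldots,\mathcal A_I$ with $(\sigma_n^{(k)})^2=(\sigma_n^2-\sigma_{N+1}^2)/(I-1)$; a direct calculation then rewrites the order-$N$ sum condition for each such $\mathcal A_k$ as exactly \eqref{eqeqsHO} for $(\sigma_1,\ldots,\sigma_{N+1})$, closing the induction everywhere except in the degenerate regime where some $\sigma_n$ coincides with $\sigma_{N+1}=I^{-1/2}$.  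That degenerate case reduces to the single corner $\sigma_1=\cdots=\sigma_{N+1}=I^{-1/2}$, which I would handle directly by the explicit diagonal tensor
\[
\mathcal T \;=\; I^{-1/2}\sum_{i=1}^I \mathbf e_i\otimes\mathbf e_i\otimes\cdots\otimes\mathbf e_i,
\]
which is manifestly non-negative, all-orthogonal, and has every largest ML singular value equal to $I^{-1/2}$, completing the proof.
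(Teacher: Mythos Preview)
Your inductive reduction has a genuine gap at the key compatibility step. You claim that the order-$N$ sum inequalities for each slice $\mathcal A_k$ ($k\ge 2$) rewrite \emph{exactly} as the order-$(N{+}1)$ inequalities \eqref{eqeqsHO} for $(\sigma_1,\dots,\sigma_{N+1})$, but this is false: with your choices $(\sigma_m^{(k)})^2=(\sigma_m^2-\sigma_{N+1}^2)/(I-1)$ and $\|\mathcal A_k\|^2=(1-\sigma_{N+1}^2)/(I-1)$, the mode-$n$ order-$N$ inequality for $\mathcal A_k$ simplifies to
\[
\sum_{\substack{m=1\\ m\ne n}}^{N}\sigma_m^2\ \le\ (N-2)+\sigma_n^2,
\]
whereas the order-$(N{+}1)$ hypothesis only gives $\sum_{m\ne n,\,m\le N}\sigma_m^2\le (N-1)-\sigma_{N+1}^2+\sigma_n^2$, which is strictly weaker unless $\sigma_{N+1}=1$. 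Concretely, for $N{+}1=4$, $I=2$ and $(\sigma_1^2,\sigma_2^2,\sigma_3^2,\sigma_4^2)=(1,\,0.8,\,0.6,\,0.5)$ all four order-$4$ inequalities and the trivial bounds hold, yet the required order-$3$ inequality $\sigma_1^2+\sigma_2^2\le 1+\sigma_3^2$ reads $1.8\le 1.6$; moreover $(\sigma_3^{(2)})^2=0.1<\|\mathcal A_2\|^2/I=0.25$, so the trivial lower bound fails as well, and the inductive hypothesis does not furnish $\mathcal A_2$. Relatedly, your ``degenerate regime'' is not confined to $\sigma_{N+1}=I^{-1/2}$: the coincidence $\sigma_n=\sigma_{N+1}$ (and near-coincidence) can occur at any value in $[I^{-1/2},1]$.

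A second, structural issue is that housing the $\mathcal A_k$ in disjoint slabs (to secure orthogonality) is in tension with ``aligning the argmax rows across $k$'' (to make the mode-$n$ contributions add to $\sigma_n^2$): if the supports are disjoint along mode $n$, the argmax rows cannot coincide. The paper avoids all of this by a direct, non-inductive construction: it identifies the $2^N-N$ vertices of the polyhedron \eqref{eqeqsHO}--\eqref{eqeqsHOobv}, builds at each vertex an explicit all-orthogonal tensor whose support lies in the common set $\{(i_1,\dots,i_N):\ i_2=\pi^{\,i_3+\dots+i_N-N+2}(i_1)\}$ (so every column of every unfolding has at most one nonzero entry), and then reaches every point of the polyhedron by taking the entrywise square root of a convex combination of the squared vertex tensors.
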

Thus, the conditions in Theorem \ref{th:main2} are not only necessary but also sufficient for $\sigma_1, \dots,\sigma_N$ to be feasible  largest ML singular values of an $I\times\dots\times I$ tensor. This result was independently proved for real $2\times\dots\times 2$ tensors in \cite{AnnaGramdeterminants}.

Theorems \ref{th:main}, \ref{th:2nonsquare}, \ref{th:main2}, and \ref{th:2Nthorder} are proved in Section \ref{sec:proofs}.

It is  natural  to ask  what happens if some inequalities in
\eqref{eqeqs} are replaced by equalities. Obviously, the three equalities in \eqref{eqeqs} hold if and only if  $\sigma_1=\sigma_2=\sigma_3=\|\mathcal T\|$, implying that 
$\unf{T}{1}$, $\unf{T}{2}$, and $\unf{T}{3}$ are rank-$1$ matrices.
Hence all the remaining ML singular values of $\mathcal T$ are zero.
Similarly, the two  equalities 
$ \sigma_1^2+\sigma_2^2= \|\mathcal T\|^2 + \sigma_3^2$ and $\sigma_1^2+\sigma_3^2= \|\mathcal T\|^2 + \sigma_2^2$ are equivalent to $\sigma_1=\|\mathcal T\|$ and $\sigma_2=\sigma_3$, 
implying that $\operatorname{rank}(\unf{T}{1})=1$ and $\operatorname{rank}(\unf{T}{2})=
\operatorname{rank}(\unf{T}{3})=:L$, i.e., $\mathcal T$ is an {\em ML rank-$(1,L,L)$ tensor}, where
$L\leq\min(I_2,I_3)$. It is clear that in this case the remaining
nonzero mode-$2$ and mode-$3$ singular values  of $\mathcal T$ also coincide and may take any positive values
whose squares sum up to $\|\mathcal T\|^2-\sigma_2^2$.
In Section \ref{Discussion} we characterize the tensors $\mathcal T$ for which the single equality $\sigma_1^2+\sigma_2^2= \|\mathcal T\|^2 + \sigma_3^2$ holds.
 We show that 
$\mathcal T$ is necessarily  equal to a sum of  ML rank-$(L_1,1,L_1)$  and ML rank-$(1,L_2,L_2)$ tensors and give a complete description of all its ML singular values. The description relies on a problem posed by H. Weyl in 1912: given the eigenvalues of two $n\times n$  Hermitian matrices $\mathbf A$ and $\mathbf B$,
what are all the possible eigenvalues of $\mathbf A+\mathbf B$? 
The following answer was conjectured by  A. Horn in 1962 \cite{Horn1962} and has been proved  through the development of the theory of honeycombs in \cite{Klyachko1998,KnutsonTao1999} (see also  \cite{Bhatia2001,Honeycombs2001}). Let
$$
\lambda_i(\cdot)\ \text{ denote the }i\text{th largest eigenvalue of a Hermitian matrix.}
$$
If
\begin{equation}\label{eq:alphabetagamma}
\alpha_i=\lambda_i(\mathbf A),\qquad \beta_i=\lambda_i(\mathbf B),\qquad  \gamma_i=\lambda_i(\mathbf A+\mathbf B),
\end{equation}
then   $\alpha_i$, $\beta_i$, and $\gamma_i$  satisfy the trivial  equality
\begin{equation}\label{eq:traceiequality}
\gamma_1+\dots+\gamma_n=\alpha_1+\dots+\alpha_n+\beta_1+\dots+\beta_n
\end{equation}
and the  list of linear inequalities
\begin{align}\label{eq:listofinequalities}
\sum\limits_{k\in K}\gamma_k\leq \sum\limits_{i\in I}\alpha_i + \sum\limits_{j\in J}\beta_j,\qquad (I,J,K)\in T_r^n,\qquad 1\leq r\leq n-1,
\end{align}
where $I=\{i_1,\dots,i_r\}$, $J=\{j_1,\dots,j_r\}$, $K=\{k_1,\dots,k_r\}$ are subsets of $\{1,\dots,n\}$ and $T_r^n$ denotes a particular finite set of  triplets $(I,J,K)$. (The construction of $T_r^n$ is given in  Appendix \ref{sec:appendixA}.) The inverse statement also holds: if $\alpha_i$, $\beta_i$, and $\gamma_i$
satisfy \eqref{eq:traceiequality} and \eqref{eq:listofinequalities}, then there exist $n\times n$ Hermitian matrices $\mathbf A$, $\mathbf B$, and $\mathbf C$ such that \eqref{eq:alphabetagamma} holds.

 We have the following results.
 \begin{theorem}\label{th:1LL and L1Lnew}
 Let  $\sigma_1^2+\sigma_2^2=\|\mathcal T\|^2+\sigma_3^2$. Then 
$\mathcal T$ is a sum of ML rank-$(L_1,1,L_1)$  and ML rank-$(1,L_2,L_2)$ tensors, where 
$L_1\leq\min (I_1,I_3)$ and $L_2\leq\min (I_2-1,I_3)$.
 \end{theorem}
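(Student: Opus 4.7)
The plan is to revisit the derivation of the inequality $\sigma_1^2+\sigma_2^2\leq\|\mathcal T\|^2+\sigma_3^2$ from Theorem \ref{th:main} and extract the equality conditions; these will produce a decomposition of the required form. Pick top left singular vectors $\mathbf a\in\mathbb C^{I_1}$ of $\unf{T}{1}$ and $\mathbf b\in\mathbb C^{I_2}$ of $\unf{T}{2}$, so that $\|\mathcal T\cdot_1\mathbf a^H\|=\sigma_1$ and $\|\mathcal T\cdot_2\mathbf b^H\|=\sigma_2$. Set $\mathbf S:=\mathcal T\cdot_1\mathbf a^H\in\mathbb C^{I_2\times I_3}$, so $\|\mathbf S\|=\sigma_1$, and split $\mathcal T=\mathcal T_1+\mathcal T_2$ via the mode-$1$ projector $\mathbf a\mathbf a^H$: $\mathcal T_1:=\mathbf a\otimes\mathbf S$ and $\mathcal T_2:=\mathcal T-\mathcal T_1$, which satisfy $\mathcal T_2\cdot_1\mathbf a^H=0$ and $\|\mathcal T_2\|^2=\|\mathcal T\|^2-\sigma_1^2$.

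Next I would re-derive the inequality from this splitting. Since $\mathcal T_2\cdot_1\mathbf a^H=0$, the matrices $\mathcal T_1\cdot_2\mathbf b^H$ and $\mathcal T_2\cdot_2\mathbf b^H$ remain mode-$1$ orthogonal, so
$$
\sigma_2^2=\|\mathcal T_1\cdot_2\mathbf b^H\|^2+\|\mathcal T_2\cdot_2\mathbf b^H\|^2\leq\|\mathcal T\cdot_1\mathbf a^H\cdot_2\mathbf b^H\|^2+\|\mathcal T_2\|^2\leq\sigma_3^2+\|\mathcal T\|^2-\sigma_1^2,
$$
where the first step uses the rank-one factorization $\mathcal T_1\cdot_2\mathbf b^H=\mathbf a\otimes(\mathbf b^H\mathbf S)$ and the Cauchy--Schwarz bound $\|\mathcal T_2\cdot_2\mathbf b^H\|\leq\|\mathcal T_2\|$, and the second uses $\mathcal T\cdot_1\mathbf a^H\cdot_2\mathbf b^H=\unf{T}{3}(\mathbf b^*\otimes\mathbf a^*)$. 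Under the hypothesis both bounds must be equalities, and the Cauchy--Schwarz equality forces $(\mathcal T_2)_{(2)}=\mathbf b\mathbf v^T$ for some $\mathbf v\in\mathbb C^{I_1I_3}$. Reshaping $\mathbf v$ as an $I_1\times I_3$ matrix $\mathbf R$ yields $(\mathcal T_2)_{ijk}=b_jR_{ik}$, identifying $\mathcal T_2$ as an ML rank-$(L_1,1,L_1)$ tensor with $L_1=\operatorname{rank}\mathbf R\leq\min(I_1,I_3)$.

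The last step, which I expect to be the only subtle one, is to sharpen the trivial bound $\operatorname{rank}\mathbf S\leq\min(I_2,I_3)$ on the mode-$2$ rank of $\mathcal T_1$ to $\min(I_2-1,I_3)$. With $\mathbf P:=\mathbf I-\mathbf b\mathbf b^H$, decompose $\mathbf S=\mathbf b\mathbf b^H\mathbf S+\mathbf P\mathbf S$ and rewrite
$$
\mathcal T=\mathbf a\otimes(\mathbf P\mathbf S)+\bigl(\mathcal T_2+\mathbf a\otimes(\mathbf b\mathbf b^H\mathbf S)\bigr).
$$
The first summand $\mathbf a\otimes(\mathbf P\mathbf S)$ is ML rank-$(1,L_2,L_2)$ with $L_2=\operatorname{rank}(\mathbf P\mathbf S)\leq\min(I_2-1,I_3)$, because the columns of $\mathbf P\mathbf S$ lie in $\mathbf b^\perp$. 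The second summand has every mode-$2$ fiber proportional to $\mathbf b$ (both addends already do), so it retains the ML rank-$(\widetilde L_1,1,\widetilde L_1)$ form with $\widetilde L_1\leq\min(I_1,I_3)$ by rank subadditivity. The bookkeeping hazard is this rearrangement---checking that merging the rank-one tensor $\mathbf a\otimes(\mathbf b\mathbf b^H\mathbf S)$ into $\mathcal T_2$ preserves its prescribed mode-$2$-rank-one structure---which is immediate because both addends share the mode-$2$ direction $\mathbf b$.
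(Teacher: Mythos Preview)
Your argument is correct and takes a genuinely different route from the paper. The paper proves Theorem~\ref{th:1LL and L1Lnew} by specializing the equality case of Lemma~\ref{lemma:main} (characterized in Lemma~\ref{lemma:1from temp}) to the block Gram matrix $\newA=\unf{T}{2}^T\unf{T}{2}^*$; the factorization $\newA=[\operatorname{vec}(\mathbf W_1)\ \newC\otimes\mathbf x][\operatorname{vec}(\mathbf W_1)\ \newC\otimes\mathbf x]^H$ is then read off as a decomposition of $\mathcal T$ into an ML rank-$(L_1,1,L_1)$ piece (coming from $\mathbf W_1$) and an ML rank-$(1,L_2,L_2)$ piece (coming from $\newC\otimes\mathbf x$). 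You instead give a self-contained variational proof of the inequality $\sigma_1^2+\sigma_2^2\le\|\mathcal T\|^2+\sigma_3^2$ via the mode-$1$ splitting $\mathcal T=\mathbf a\otimes\mathbf S+\mathcal T_2$ and then extract the equality conditions directly; the Cauchy--Schwarz equality immediately forces the mode-$2$ fibers of $\mathcal T_2$ to be parallel to $\mathbf b$, and the final rearrangement to push the $L_2$ bound down to $\min(I_2-1,I_3)$ is clean. Your approach is shorter and more elementary for this particular theorem; the paper's investment in the abstract Lemmas~\ref{lemma:main}, \ref{lemma:1from temp}, \ref{lemma:10} pays off later, since the same machinery is what drives the full spectral characterization in Theorem~\ref{th:1LL and L1L} via Horn's inequalities. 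One cosmetic remark: in your displayed chain the first ``$\le$'' is in fact an equality for the $\mathcal T_1$-term (since $\|\mathcal T_1\cdot_2\mathbf b^H\|=\|\mathbf b^H\mathbf S\|$ exactly), so the only genuine inequalities are $\|\mathcal T_2\cdot_2\mathbf b^H\|\le\|\mathcal T_2\|$ and $\|\unf{T}{3}(\mathbf b^*\otimes\mathbf a^*)\|\le\sigma_3$, exactly the two you then saturate.
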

\begin{theorem}\label{th:1LL and L1L}
Let  $\sigma_1^2+\sigma_2^2=\|\mathcal T\|^2+\sigma_3^2$. Then the values  
\begin{equation*}
\begin{split}
\sigma_1&=\sigma_{11}\geq \sigma_{12}\geq\dots\geq\sigma_{1I_1}\geq 0,\\
\sigma_2&=\sigma_{21}\geq \sigma_{22}\geq\dots\geq\sigma_{2I_2}\geq 0,\\
\sigma_3&=\sigma_{31}\geq \sigma_{32}\geq\dots\geq\sigma_{3I_3}\geq 0,
\end{split}
\end{equation*}
are the mode-$1$, mode-$2$, and mode-$3$ singular values of an $I_1\times I_2\times I_3$ tensor $\mathcal T$, respectively,  if and only if
$$
\sigma_{11}^2+\dots+\sigma_{1I_1}^2=
\sigma_{21}^2+\dots+\sigma_{2I_2}^2=
\sigma_{31}^2+\dots+\sigma_{3I_3}^2=\|\mathcal T\|^2,
$$
\begin{equation*}
\begin{split}
\sigma_{1i}&=0 \ \text{ for }\ i>\min(I_1,I_3), \\
\sigma_{2i}&=0 \ \text{ for }\ i>\min(I_2,I_3),
\end{split}
\end{equation*}
and \eqref{eq:traceiequality} and \eqref{eq:listofinequalities} hold for
\begin{equation}\label{eq:seventeen}
\begin{split}
\alpha_i=\begin{cases}
\sigma_{1i+1}^2,& i\leq\min(I_1,I_3)\\
 0,& \text{otherwise}
 \end{cases},\ 
\beta_i=\begin{cases}
\sigma_{2i+1}^2,& i\leq\min(I_2,I_3)\\
 0,& \text{otherwise}
 \end{cases},\ 
\gamma_i=\sigma_{3i+1}^2,
\end{split}
\end{equation}
and $n=I_3-1$.
\end{theorem}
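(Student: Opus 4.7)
My plan is to prove both directions by reducing to Horn's problem on eigenvalues of a sum of Hermitian matrices, via the decomposition $\mathcal T=\mathcal T_1+\mathcal T_2$ supplied by Theorem~\ref{th:1LL and L1Lnew}. The Frobenius identities $\sum_i\sigma_{ni}^2=\|\mathcal T\|^2$ are standard, and the vanishing conditions $\sigma_{1i}=0$ for $i>\min(I_1,I_3)$ and $\sigma_{2i}=0$ for $i>\min(I_2,I_3)$ follow from the mode-$1$ and mode-$2$ rank bounds that come with this decomposition, so the substantive content is the Horn list~\eqref{eq:listofinequalities}.

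For the $(\Rightarrow)$ implication I would apply Theorem~\ref{th:1LL and L1Lnew} to write $\mathcal T=\mathcal T_1+\mathcal T_2$ with $\mathcal T_1(i_1,i_2,i_3)=M_1(i_1,i_3)b_1(i_2)$ and $\mathcal T_2(i_1,i_2,i_3)=a_2(i_1)N_2(i_2,i_3)$. A direct computation shows that the nonzero squared mode-$1$ and mode-$3$ singular values of $\mathcal T_1$ coincide, and likewise the nonzero squared mode-$2$ and mode-$3$ singular values of $\mathcal T_2$; concretely, the former are the eigenvalues of $\|\mathbf b_1\|^2\mathbf M_1\mathbf M_1^H$ (equivalently of $\|\mathbf b_1\|^2\mathbf M_1^H\mathbf M_1$) and the latter are the eigenvalues of $\|\mathbf a_2\|^2\mathbf N_2\mathbf N_2^H$. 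After the top mode-$3$ singular direction (which carries the eigenvalue $\sigma_3^2$) is split off, the remaining $(I_3-1)$-dimensional piece of $\unf{T}{3}\unf{T}{3}^H$ can be expressed as $\mathbf A+\mathbf B$, where $\mathbf A$ and $\mathbf B$ are Hermitian PSD matrices of size $(I_3-1)$ carrying the eigenvalues $(\alpha_i)$ and $(\beta_i)$ of~\eqref{eq:seventeen}. The hypothesis $\sigma_1^2+\sigma_2^2=\|\mathcal T\|^2+\sigma_3^2$ is exactly the trace identity~\eqref{eq:traceiequality} for this choice, and Horn's theorem applied to $(\mathbf A,\mathbf B,\mathbf A+\mathbf B)$ yields the inequalities~\eqref{eq:listofinequalities}.

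For the $(\Leftarrow)$ implication I would reverse the construction: the Klyachko--Knutson--Tao theorem supplies $(I_3-1)\times(I_3-1)$ Hermitian PSD matrices $\mathbf A,\mathbf B$ with eigenvalues $(\alpha_i),(\beta_i)$ such that $\mathbf A+\mathbf B$ has eigenvalues $(\gamma_i)$; Cholesky-type factorizations of $\mathbf A$ and $\mathbf B$ provide factor matrices $\mathbf M_1,\mathbf N_2$, and placing unit vectors $\mathbf a_2,\mathbf b_1$ in directions orthogonal to the column ranges of $\mathbf M_1,\mathbf N_2$ (inside $\mathbb C^{I_1}$ and $\mathbb C^{I_2}$) and scaling so that $\|\mathcal T_2\|=\sigma_1$ and $\|\mathcal T_1\|=\sigma_2$, one assembles $\mathcal T:=\mathcal T_1+\mathcal T_2$; the same identification as in the forward direction confirms the singular values. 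The hard part is the split-off in the forward direction: a calculation shows that the mode-$3$ cross-correlation between $\mathcal T_1$ and $\mathcal T_2$ is a rank-one matrix proportional to $(\mathbf M_1^T\overline{\mathbf a_2})(\mathbf N_2^H\mathbf b_1)^T$, and analogous rank-one cross terms appear in modes $1$ and $2$. To cast the mode-$3$ Gram matrix into the pure form $\mathbf C=\mathbf A+\mathbf B$ required by Horn, one must exploit the non-uniqueness of the decomposition in Theorem~\ref{th:1LL and L1Lnew} to absorb these cross contributions entirely into the top-singular-value direction; carrying this out compatibly with the sharp rank bounds $L_1\le\min(I_1,I_3)$ and $L_2\le\min(I_2-1,I_3)$ is the delicate point where the single scalar equality $\sigma_1^2+\sigma_2^2=\|\mathcal T\|^2+\sigma_3^2$ is leveraged to yield the full list of Horn inequalities.
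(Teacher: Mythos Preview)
Your overall strategy---reduce to Horn's problem via the decomposition of Theorem~\ref{th:1LL and L1Lnew}---is the same as the paper's, but both directions of your sketch have genuine gaps at exactly the point you flag as ``delicate.''

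\textbf{Forward direction.} You write that one must ``exploit the non-uniqueness'' of the decomposition to absorb the cross terms into the top singular direction, but you never show this can be done. In fact the difficulty is not the cross terms: with the decomposition coming from the paper's proof of Theorem~\ref{th:1LL and L1Lnew} (where $\mathbf b_1$ is orthogonal to the mode-$2$ range of $\mathcal T_2$), the mode-$3$ cross term already vanishes, and the mode-$3$ Gram matrix is simply $\mathbf M_1^H\mathbf M_1+\mathbf N_2^H\mathbf N_2$. The real issue is that this sum does \emph{not} automatically split as $\sigma_3^2\oplus(\mathbf A+\mathbf B)$ with $\sigma(\mathbf A)=(\alpha_i)$ and $\sigma(\mathbf B)=(\beta_i)$. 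What is needed is that a principal eigenvector of $\mathbf M_1^H\mathbf M_1$ lies in the kernel of $\mathbf N_2^H\mathbf N_2$; this is condition~\ref{itm:5} of Lemma~\ref{lemma:1from temp}, and it is obtained not from the statement of Theorem~\ref{th:1LL and L1Lnew} but by revisiting the proof of Lemma~\ref{lemma:main} and tracing where equality forces structure (this is exactly what Lemmas~\ref{lemma:1from temp} and~\ref{lemma:10} do). Without this eigenvector alignment you cannot identify the $(\alpha_i)$ and $(\beta_i)$ of \eqref{eq:seventeen} with the spectra of summands whose sum has spectrum $(\gamma_i)$.

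\textbf{Reverse direction.} Your construction places $\mathbf a_2$ orthogonal to the column range of $\mathbf M_1$ and takes $\mathbf M_1$ to be a Cholesky factor of $\mathbf A$. But then the mode-$1$ Gram matrix of $\mathcal T$ is block diagonal with blocks $\mathbf M_1\mathbf M_1^H$ and $\|\mathcal T_2\|^2$, which forces $\|\mathcal T_1\|^2=\operatorname{tr}(\mathbf A)=\sum_i\alpha_i=\|\mathcal T\|^2-\sigma_1^2$. You want $\|\mathcal T_1\|^2=\sigma_2^2=\|\mathcal T\|^2-\sigma_1^2+\sigma_3^2$, so your scaling is inconsistent unless $\sigma_3=0$. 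The fix, visible in Lemma~\ref{lemma:10}\textup{(ii)}, is that $\mathbf M_1$ (the paper's $\mathbf W_1$) must carry an extra $\sqrt{L}$ block with $L=\sigma_3^2$, and $\mathbf a_2$ (the paper's $\mathbf x$) must be the corresponding principal eigenvector of $\mathbf M_1\mathbf M_1^H$, \emph{not} orthogonal to its range. This is again condition~\ref{itm:5} appearing in the construction.

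In short, the paper bypasses the tensor-level bookkeeping by working with the positive semidefinite matrix $\mathbf H=\unf{T}{2}^T\unf{T}{2}^*$ and characterising the equality case of \eqref{eq:eq1} directly (Lemma~\ref{lemma:1from temp}); the common-eigenvector condition~\ref{itm:5} that emerges is precisely what makes the $\sigma_3^2\oplus(\mathbf A+\mathbf B)$ split go through (Lemma~\ref{lemma:10}), after which Horn's theorem applies cleanly.
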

\begin{example}
If $n=2$, then $T_1^2=\{(i,j,k): k=i+j-1, 1\leq i,j,k\leq 2\}=\{(1,1,1), (1,2,2), (2,1,2)\}$ (see Appendix \ref{sec:appendixA}). By Horn's conjecture, the equality $\gamma_1+\gamma_2=\alpha_1+\alpha_2+\beta_1+\beta_2$  together with  the  inequalities (also known as the Weyl inequalities) 
\begin{equation}\label{eq:Weyl}
\gamma_1\leq \alpha_1+\beta_1,\qquad \gamma_2\leq \alpha_1+\beta_2,\qquad \gamma_2\leq \alpha_2+\beta_1,
\end{equation}
characterize the values $\alpha_1,\alpha_2,\beta_1,\beta_2,\gamma_1,\gamma_2$ that can be eigenvalues of $2\times 2$  Hermitian matrices $\mathbf A$, $\mathbf B$, and $\mathbf A+\mathbf B$. Let $\sigma_{11}^2+\sigma_{21}^2=\|\mathcal T\|^2+\sigma_{31}^2$. From Theorem \ref{th:1LL and L1L} and \eqref{eq:Weyl} it follows that the values
$\sigma_{11}\geq \sigma_{12}\geq\sigma_{13}\geq 0$,  $\sigma_{21}\geq \sigma_{22}\geq\sigma_{23}\geq 0$,
and $\sigma_{31}\geq \sigma_{32}\geq\sigma_{33}\geq 0$, are the   mode-$1$, mode-$2$, and mode-$3$ singular values, respectively, of a $3\times 3\times 3$ tensor $\mathcal T$  if and only if 
\begin{align*}
\sigma_{11}^2+\sigma_{12}^2+\sigma_{13}^2=
\sigma_{21}^2+\sigma_{22}^2+\sigma_{23}^2=
\sigma_{31}^2+\sigma_{32}^2+\sigma_{33}^2=\|\mathcal T\|^2,\\
\sigma_{32}^2\leq \sigma_{12}^2+\sigma_{22}^2,\qquad
\sigma_{33}^2\leq \sigma_{12}^2+\sigma_{23}^2,\qquad
\sigma_{33}^2\leq \sigma_{13}^2+\sigma_{22}^2.
\end{align*}
\end{example}
Horn's Conjecture has recently also been linked to singular values of matrix unfoldings in the Tensor Train format \cite{SebastianMLSVDTT}.
\section{Proofs of Theorems \ref{th:main}, \ref{th:2nonsquare}, \ref{th:main2}, and \ref{th:2Nthorder} }\label{sec:proofs}
The following lemma will be used in the proof of Theorem \ref{th:main}. 
\begin{lemma}\label{lemma:main}
Let $\newA=(\newA_{ij})_{i,j=1}^{\newK}\in\mathbb C^{{\newK} \newn\times {\newK} \newn}$ be a positive semidefinite matrix  consisting of the blocks 
 $\newA_{ij}\in\mathbb C^{\newn\times \newn}$.
Then 
\begin{equation}\label{eq:eq1}
 \lambda_{max}(\newA_{11}+\dots+\newA_{{\newK} {\newK}})+
  \lambda_{max}(\newA)\leq 
  \operatorname{tr}(\newA)+
  \lambda_{max}(\mathbf \Phi(\newA)).
\end{equation}
where $\mathbf \Phi(\newA)$ denotes the ${\newK}\times {\newK}$ matrix  with the  entries $\left(\mathbf \Phi(\newA)\right)_{ij}=\operatorname{tr}(\newA_{ij})$
and $\lambda_{max}(\cdot)$ denotes the  largest eigenvalue of a matrix.
\end{lemma}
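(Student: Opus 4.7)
The plan is to peel off the top eigenvalue of $\mathbf H$ via a rank-one update and then play Weyl's subadditivity of the largest eigenvalue against its monotonicity on the psd cone, exploiting the identity $\lambda_{\max}(\mathbf V\mathbf V^H)=\lambda_{\max}(\mathbf V^H\mathbf V)$ so that the two rank-one contributions cancel exactly.

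Let $\lambda:=\lambda_{\max}(\mathbf H)$ and let $v\in\mathbb C^{I_3 I_1}$ be a corresponding unit eigenvector, written in block form $v=(v_1^T,\dots,v_{I_3}^T)^T$ with $v_i\in\mathbb C^{I_1}$. Since $\lambda$ is the largest eigenvalue of $\mathbf H$, the matrix $\mathbf H':=\mathbf H-\lambda vv^H$ is positive semidefinite; its $(i,j)$ block equals $\mathbf H_{ij}-\lambda v_iv_j^H$. Summing the diagonal blocks I get
\begin{equation*}
\mathbf H_{11}+\dots+\mathbf H_{I_3 I_3}=\lambda\,\mathbf V\mathbf V^H+\mathbf A',\qquad \mathbf V:=[v_1\ \dots\ v_{I_3}]\in\mathbb C^{I_1\times I_3},\qquad \mathbf A':=\sum_i\mathbf H'_{ii}\succeq 0,
\end{equation*}
while applying $\mathbf \Phi$ gives entrywise $\mathbf \Phi(\mathbf H)_{ij}=\lambda\, v_j^Hv_i+\mathbf \Phi(\mathbf H')_{ij}$, where $\mathbf \Phi(\mathbf H')\succeq 0$ and the ``Gram'' matrix $(v_j^Hv_i)_{ij}$ is the complex conjugate of the Hermitian psd matrix $\mathbf V^H\mathbf V$, hence has the same spectrum. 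In particular, its largest eigenvalue equals $\sigma^2:=\lambda_{\max}(\mathbf V\mathbf V^H)$, the squared top singular value of $\mathbf V$.

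Combining Weyl's subadditivity
\begin{equation*}
\lambda_{\max}(\mathbf H_{11}+\dots+\mathbf H_{I_3I_3})\le \lambda\sigma^2+\lambda_{\max}(\mathbf A')
\end{equation*}
with monotonicity of $\lambda_{\max}$ on the psd cone,
\begin{equation*}
\lambda_{\max}(\mathbf \Phi(\mathbf H))\ge \lambda\sigma^2,
\end{equation*}
the two $\lambda\sigma^2$ terms cancel and give $\lambda_{\max}(\mathbf H_{11}+\dots+\mathbf H_{I_3I_3})-\lambda_{\max}(\mathbf \Phi(\mathbf H))\le \lambda_{\max}(\mathbf A')$. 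To finish, I use the trivial bound $\lambda_{\max}(\mathbf A')\le \operatorname{tr}(\mathbf A')$ valid for any psd matrix, together with the block-trace identity $\operatorname{tr}(\mathbf A')=\sum_i\operatorname{tr}(\mathbf H'_{ii})=\operatorname{tr}(\mathbf H')=\operatorname{tr}(\mathbf H)-\lambda$, which yields \eqref{eq:eq1} after adding $\lambda=\lambda_{\max}(\mathbf H)$ to both sides.

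The proof itself is short; the only nontrivial step is choosing the right decomposition of $\mathbf H$. What makes the argument work is the precise matching of the two rank-one contributions through the equality $\lambda_{\max}(\mathbf V\mathbf V^H)=\lambda_{\max}(\mathbf V^H\mathbf V)$ between the diagonal-block sum and the trace-block map; once one notices that the ``partial trace on the first factor'' and the ``partial trace on the second factor'' of a rank-one projector $vv^H$ produce two matrices with identical top eigenvalue, everything else is routine linear algebra.
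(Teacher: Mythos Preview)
Your proof is correct and follows essentially the same approach as the paper's: both peel off the top eigenvector $v$ of $\mathbf H$ (the paper's $\mathbf w_1=\sqrt{\lambda}\,v$), bound the remainder $\mathbf H'$ on the diagonal-block sum by its trace (the paper's Cauchy step is exactly your $\lambda_{\max}(\mathbf A')\le\operatorname{tr}(\mathbf A')$), and use $\lambda_{\max}(\mathbf V\mathbf V^H)=\lambda_{\max}(\mathbf V^H\mathbf V)$ together with psd monotonicity of $\lambda_{\max}$ to handle $\mathbf\Phi(\mathbf H)$. Your partial-trace formulation is a bit cleaner, but the steps correspond one-to-one with the paper's eigen-decomposition argument.
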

\begin{proof}
To get an idea of the proof we refer the reader to the  mathoverflow page \cite{fedja} where the case $I_3=2$ was discussed. Here we present a formal proof for $I_3\geq 2$.
Let $\newA=\sum\limits_{r=1}^R\mathbf w_{r}\mathbf w_{r}^H$, where $\mathbf w_r$ are orthogonal and
$\mathbf w_r=[\mathbf w_{1r}^T \ \dots\ \mathbf w_{{\newK} r}^T]^T$ with $\mathbf w_{kr}\in\mathbb C^{{\newK}}$.
First, we rewrite \eqref{eq:eq1} in terms of $\mathbf w_{kr}$, $1\leq k\leq {\newK}$, $1\leq r\leq R$.
WLOG, we can assume that  $\|\mathbf w_1\|=\max\limits_{r}\|\mathbf w_r\|$. Hence,
\begin{equation}\label{eq:eq2}
\lambda_{max}(\newA)=\|\mathbf w_1\|^2=\sum\limits_{k=1}^{\newK}\|\mathbf w_{k1}\|^2.
\end{equation}
It is clear that
$$
\newA_{ij} = \sum\limits_{r=1}^R\mathbf w_{ir}\mathbf w_{jr}^H,\qquad 1\leq i,j\leq {\newK}.
$$
Hence
\begin{equation}\label{eq:eq3}
\lambda_{max}(\newA_{11}+\dots+\newA_{{\newK} {\newK}})=\max\limits_{\|\mathbf x\|=1}\sum\limits_{k=1}^{\newK} (\newA_{kk}\mathbf x,\mathbf x)=
\max\limits_{\|\mathbf x\|=1}\sum\limits_{k=1}^{\newK}\sum\limits_{r=1}^R|(\mathbf w_{kr},\mathbf x)|^2.
\end{equation}
Since $\newA=\sum\limits_{r=1}^R\mathbf w_{r}\mathbf w_{r}^H$, it follows that
\begin{equation}\label{eq:eq4}
\operatorname{tr}(\newA)=
\sum\limits_{r=1}^R\|\mathbf w_r\|^2.
\end{equation}
Since 
$$
\mathbf \Phi(\newA)_{ij}=\operatorname{tr}(\newA_{ij})=\operatorname{tr}\left(\sum\limits_{r=1}^R
\mathbf w_{ir}\mathbf w_{jr}^H\right)=
\sum\limits_{r=1}^R \mathbf w_{jr}^H \mathbf w_{ir}=
\sum\limits_{r=1}^R \mathbf w_{ir}^T \mathbf w_{jr}^*,
$$
it follows that
\begin{equation}\label{eq:eq4.5}
\begin{split}
\mathbf \Phi(\newA)=
&\sum\limits_{r=1}^R
\left[
\begin{matrix}
\mathbf w_{1r}^T\mathbf w_{1r}^*&\dots& \mathbf w_{1r}^T\mathbf w_{{\newK} r}^*\\
\vdots&\dots&\vdots\\
\mathbf w_{{\newK} r}^T\mathbf w_{1r}^*&\dots& \mathbf w_{{\newK} r}^T\mathbf w_{{\newK} r}^*
\end{matrix}
\right]=\\
&\sum\limits_{r=1}^R
\left[
\begin{matrix}
\mathbf w_{1r}^T\\
\vdots\\
\mathbf w_{{\newK} r}^T
\end{matrix}
\right]
\left[
\begin{matrix}
\mathbf w_{1r}^*&\dots&\mathbf w_{{\newK} r}^*
\end{matrix}
\right]=\sum\limits_{r=1}^R\mathbf W_r^T\mathbf W_r^*,`
\end{split}
\end{equation}
where
$$
\mathbf W_r:=[\mathbf w_{1r}\ \dots\ \mathbf w_{{\newK} r}]\in\mathbb C^{\newn\times {\newK}}.
$$
Now we prove \eqref{eq:eq1}.
By \eqref{eq:eq2}, \eqref{eq:eq3}, the Cauchy inequality, and \eqref{eq:eq4},
\begin{equation}\label{eq:eq5}
\begin{split}
&\lambda_{max}(\newA)+\lambda_{max}(\newA_{11}+\dots\newA_{{\newK} {\newK}})=\\
&\|\mathbf w_1\|^2 + \max\limits_{\|\mathbf x\|=1}\left[
\sum\limits_{k=1}^{\newK}|(\mathbf w_{k1},\mathbf x)|^2+
\sum\limits_{k=1}^{\newK}\sum\limits_{r=2}^R|(\mathbf w_{kr},\mathbf x)|^2\right]\leq\\
&\|\mathbf w_1\|^2 + \max\limits_{\|\mathbf x\|=1}
\left[
\sum\limits_{k=1}^{\newK}|(\mathbf w_{k1},\mathbf x)|^2
\right] 
+ \sum\limits_{r=2}^R\|\mathbf w_r\|^2=
\operatorname{tr}(\newA) + 
\max\limits_{\|\mathbf x\|=1}\left[
\sum\limits_{k=1}^{\newK}|(\mathbf w_{k1},\mathbf x)|^2\right].
\end{split}
\end{equation}
To complete the proof of \eqref{eq:eq1} we should show that 
$$
\max\limits_{\|\mathbf x\|=1}\left[
\sum\limits_{k=1}^{\newK}|(\mathbf w_{k1},\mathbf x)|^2\right]\leq\lambda_{\max}(\mathbf \Phi(\newA)).
$$
This can be done as follows
\begin{equation}\label{equation18}
\begin{split}
\max\limits_{\|\mathbf x\|=1}\left[
\sum\limits_{k=1}^{\newK}|(\mathbf w_{k1},\mathbf x)|^2\right]=
\max\limits_{\|\mathbf x\|=1}\left[
\sum\limits_{k=1}^{\newK}\mathbf x^H\mathbf w_{k1}\mathbf w_{k1}^H\mathbf x\right]=
\lambda_{max}\left(
\mathbf W_1\mathbf W_1^H\right)=
\\
\lambda_{max}\left(
\mathbf W_1^H\mathbf W_1\right)\leq \lambda_{max}\left(\sum\limits_{r=1}^R\mathbf W_r^H\mathbf W_r\right)=
\lambda_{max}(\mathbf\Phi(\newA)^*)=\lambda_{max}(\mathbf\Phi(\newA)).
\end{split}
\end{equation}
\end{proof}
Now we are ready to prove Theorem \ref{th:main}.
\begin{proof}[Proof of Theorem \ref{th:main}]
The  three inequalities in \eqref{eqeqstrivial} are obvious. We prove that $\sigma_1^2+\sigma_2^2\leq \|\mathcal T\|^2 + \sigma_3^2$. The
proofs of the  inequalities $\sigma_1^2+\sigma_3^2\leq \|\mathcal T\|^2 + \sigma_2^2$ and $\sigma_2^2+\sigma_3^2\leq \|\mathcal T\|^2 + \sigma_1^2$ can be obtained in a similar way.

By definition of ML singular values,
 \begin{align*}
 \sigma_1^2&=\lambda_{max}(\unf{T}{1}\unf{T}{1}^H)=\lambda_{max}(\mathbf T_1\mathbf T_1^H+\dots+\mathbf T_{I_3}\mathbf T_{I_3}^H),\\
 \sigma_2^2&=\lambda_{max}(\unf{T}{2}^H\unf{T}{2})=
 \lambda_{max}(\unf{T}{2}^T\unf{T}{2}^*) = 
 \lambda_{max}(\newA),
 \end{align*}
 where
 $$
  \newA= \unf{T}{2}^T\unf{T}{2}^*=
   \left[
    \begin{matrix}
    \mathbf T_1\mathbf T_1^H&\dots& \mathbf T_1\mathbf T_{I_3}^H\\
    \vdots&\dots&\vdots\\
    \mathbf T_{I_3}\mathbf T_1^H&\dots& \mathbf T_{I_3}\mathbf T_{I_3}^H
    \end{matrix}
    \right].
    $$
Since 
 $\operatorname{vec}(\mathbf T_i)^T(\operatorname{vec}(\mathbf T_j)^T)^H=
 \operatorname{tr}(\mathbf T_i\mathbf T_j^H)$, it follows that
  \begin{align*}
   \sigma_3^2&=\lambda_{max}(\unf{T}{3}\unf{T}{3}^H)=   \lambda_{max}(\mathbf \Phi(\newA)),
 \end{align*}
 where
 $$
 \mathbf \Phi(\newA) = 
 \left[
 \begin{matrix}
 \operatorname{tr}(\mathbf T_1\mathbf T_1^H)&\dots& \operatorname{tr}(\mathbf T_1\mathbf T_{I_3}^H)\\
 \vdots&\dots&\vdots\\
 \operatorname{tr}(\mathbf T_{I_3}\mathbf T_1^H)&\dots& \operatorname{tr}(\mathbf T_{I_3}\mathbf T_{I_3}^H)
 \end{matrix}
 \right].
 $$
 Since $\|\mathcal T\|^2=\operatorname{tr}(\newA)$, the inequality $\sigma_1^2+\sigma_2^2\leq \|\mathcal T\|^2 + \sigma_3^2$ is equivalent to
 $$
 \lambda_{max}(\mathbf T_1\mathbf T_1^H+\dots+\mathbf T_{I_3}\mathbf T_{I_3}^H)+
  \lambda_{max}(\newA)\leq 
  \operatorname{tr}(\newA)+
  \lambda_{max}(\mathbf \Phi(\newA)),
 $$
 which holds by Lemma \ref{lemma:main}. 
\end{proof}

\begin{proof}[Proof of Theorem \ref{th:2nonsquare}]
The proof consists of three steps. In the first step we construct all-orthogonal and non-negative $I_1\times I_2\times I_3$ tensors $\mathcal S_2$, $\mathcal X_2$, $\mathcal Y_2$, $\mathcal Z_2$, and $\mathcal N$ whose squared largest ML singular values  are the coordinates of
$S_2(\frac{1}{I_1},\frac{1}{I_1},\frac{1}{I_1})$, $X_2(1,\frac{1}{I_2},\frac{1}{I_2})$, $Y_2(\frac{1}{I_1},1,\frac{1}{I_1})$, $Z_2(\frac{1}{I_1},\frac{1}{I_1},1)$, and $N(1,1,1)$, respectively  (see Figure \ref{fig2:b}). Then  we show that  because of the zero patterns of $\mathcal S_2$, $\mathcal X_2$, $\mathcal Y_2$, $\mathcal Z_2$, and $\mathcal N$, 
the tensor
\begin{equation}\label{eq:212}
\mathcal T=\left(t_{S_2}\mathcal S_2^2 + t_{X_2}\mathcal X_2^2+t_{Y_2}\mathcal Y_2^2+t_{Z_2}\mathcal Z_2^2+t_{N}\mathcal N^2\right)^{\frac{1}{2}},
\end{equation}
is  all-orthogonal for any non-negative values $t_{S_2}$, $t_{X_2}$, $t_{Y_2}$, $t_{Z_2}$, $t_{N}$.
The superscripts ``$2$'' and ``$\frac12$'' in \eqref{eq:212}  denote the  entrywise operations.
Finally, in the third step, we find non-negative values $t_{S_2}$, $t_{X_2}$, $t_{Y_2}$, $t_{Z_2}$, $t_N$ such that $\mathcal T$ is norm-$1$ tensor whose  squared largest ML singular values  are equal to $\sigma_1^2$, $\sigma_2^2$, and $\sigma_3^2$.

{\em Step 1.} Let $\pi$ denote the cyclic permutation
$
\pi:1\rightarrow I_1\rightarrow I_1-1\rightarrow\dots\rightarrow 2\rightarrow 1
$.
The tensors  $\mathcal S_2$, $\mathcal X_2$, $\mathcal Y_2$, and $\mathcal Z_2$ are defined by 
\begin{align*}
\mathcal S_{2,ijk}&=
\begin{cases}
\frac{1}{I_1},& \text{if } j=\pi^{k-1}(i)\text{ and } 1\leq i,k\leq I_1,\\
0,& \text{otherwise},
\end{cases}\\
\mathcal X_{2,ijk}&=
\begin{cases}
\frac{1}{\sqrt{I_2}},& \text{if }  j=\pi^{k-1}(i),\ i=1,  \text{ and } 1\leq k\leq I_1,\\
\frac{1}{\sqrt{I_2}},& \text{if } i=1 \text{ and } I_1< j=k\leq I_2,\\
0,& \text{otherwise},
\end{cases}\\
\mathcal Y_{2,ijk}&=
\begin{cases}
\frac{1}{\sqrt{I_1}},& \text{if } j=\pi^{k-1}(i),\  j=1, \text{ and } 1\leq k\leq I_1,\\
0,& \text{otherwise},
\end{cases}\\
\mathcal Z_{2,ijk}&=
\begin{cases}
\frac{1}{\sqrt{I_1}},& \text{if }  j=\pi^{k-1}(i),\ k=1,\text{ and } 1\leq i\leq I_1,\\
0,& \text{otherwise},
\end{cases}
\end{align*}
and the tensor $\mathcal N$, by definition, has only one nonzero entry, $\mathcal N_{111}=1$.
 For instance, if $I_1=I_2=I_3=2$, then the first matrix unfoldings of 
$\mathcal S_2$, $\mathcal X_2$, $\mathcal Y_2$,  $\mathcal Z_2$, and $\mathcal N$ have the form
\begin{equation*}
\begin{split}
\mathbf S_{2,(1)}&=\frac12\left[\begin{matrix}1&0&0&1\\0&1&1&0\end{matrix}\right],\
\mathbf X_{2,(1)}=\frac{1}{\sqrt{2}}\left[\begin{matrix}1&0&0&0\\0&1&0&0\end{matrix}\right],\\
\mathbf Y_{2,(1)}&=\frac{1}{\sqrt{2}}\left[\begin{matrix}1&0&0&0\\0&0&1&0\end{matrix}\right],\
\mathbf Z_{2,(1)}=\frac{1}{\sqrt{2}}\left[\begin{matrix}1&0&0&1\\0&0&0&0\end{matrix}\right],\
\mathbf N_{(1)}=\left[\begin{matrix}1&0&0&0\\0&0&0&0\end{matrix}\right].
\end{split} 
\end{equation*}

{\em Step 2.} 
It is clear that the $(i,j,k)$th entry of a linear combination of $\mathcal S_2^2$, $\mathcal X_2^2$, $\mathcal Y_2^2$, $\mathcal Z_2^2$, and $\mathcal N^2$ may be nonzero only if 
$$
j=\pi^{k-1}(i)\text{ and } 1\leq i,k\leq I_1\qquad\qquad\text{or}\qquad\qquad
 i=1 \text{ and } I_1< j=k\leq I_2.
$$
The same is also true for  $\mathcal T$ defined  in \eqref{eq:212}. One can easily check that
each column of $\unf{T}{1}$, $\unf{T}{2}$, and $\unf{T}{3}$  contains at most one nonzero entry, implying that $\mathcal T$ is all-orthogonal tensor.

{\em Step 3.} From the construction of the all-orthogonal tensors $\mathcal S_2$, $\mathcal X_2$, $\mathcal Y_2$,
$\mathcal Z_2$, and $\mathcal N$ it follows that their largest ML singular values are equal to the Frobenius norms of  the first  rows of their  matrix unfoldings. 
Thus,   the same property should also hold for $\mathcal T$ whenever the values $t_{S_2}$, $t_{X_2}$, $t_{Y_2}$, $t_{Z_2}$, and $t_N$ are non-negative.
Now the result follows from the fact that the  polyhedron in Figure \ref{fig2:b}
is the convex hull of the points $S_2$, $X_2$, $Y_2$, $Z_2$, and $N$.
We can also write the values of $t_{S_2}$, $t_{X_2}$, $t_{Y_2}$, $t_{Z_2}$, and $t_N$ explicitly. 
We set
\begin{equation*}
f(\sigma_1^2,\sigma_2^2,\sigma_3^2):=(I_1I_2+I_2-2I_1)\sigma_1^2+
(I_1-1)I_2\sigma_2^2+
(I_1-1)I_2\sigma_3^2+(2-I_1I_2-I_2).
\end{equation*}
If $(\sigma_1^2,\sigma_2^2, \sigma_3^2)$ belongs to the tetrahedron $X_2Y_2Z_2N$, i.e.,
$f(\sigma_1^2,\sigma_2^2,\sigma_3^2)\geq 0$, then
\begin{align*}
t_{X_2} &= \frac{I_2}{2(I_2-1)}(1+\sigma_1^2-\sigma_2^2-\sigma_3^2), \  t_{Y_2} = \frac{I_1}{2(I_1-1)}(1+\sigma_2^2-\sigma_1^2-\sigma_3^2),\\
t_{Z_2} &= \frac{I_1}{2(I_1-1)}(1+\sigma_3^2-\sigma_1^2-\sigma_2^2), \\
t_N &= 1-t_{X_2}-t_{Y_2}-t_{Z_2}= \frac{f(\sigma_1^2,\sigma_2^2,\sigma_3^2)}{2(I_1-1)(I_2-1)},\qquad t_{S_2}=0.
\end{align*}
If $(\sigma_1^2,\sigma_2^2, \sigma_3^2)$ belongs to the tetrahedron $X_2Y_2Z_2S_2$, i.e., 
$f(\sigma_1^2,\sigma_2^2,\sigma_3^2)\leq 0$,
then
\begin{align*}
t_{X_2} &= \frac{I_1}{I_1-1}(\sigma_1^2-\frac{1}{I_1}),\\
t_{Y_2} &= \frac{I_1}{I_1-1}(\sigma_2^2-\frac{1}{I_1})+\frac{(I_2-I_1)I_1}{(I_1^2-1)I_2}(\sigma_1^2-\frac{1}{I_1}),\\
t_{Z_2} &= \frac{I_1}{I_1-1}(\sigma_3^2-\frac{1}{I_1})+\frac{(I_2-I_1)I_1}{(I_1^2-1)I_2}(\sigma_1^2-\frac{1}{I_1}),\\
t_{S_2} &= 1-t_{X_2}-t_{Y_2}-t_{Z_2}=\frac{-f(\sigma_1^2,\sigma_2^2,\sigma_3^2)I_1}{I_2(I_1-1)^2},\qquad t_N=0.
\end{align*}
\end{proof}
\begin{proof}[Proof of Theorem \ref{th:main2}]
The inequalities in \eqref{eqeqsHOobv} are obvious. We prove that 
\begin{equation}\label{eq:long_Nthorderinequality}
\sigma_1^2+\dots+\sigma_{N-1}^2\leq (N-2)\|\mathcal T\|^2 + \sigma_N^2.
\end{equation}
The proofs of the remaining $N-1$  inequalities in \eqref{eqeqsHO} can be obtained in a similar way. 

The proof of \eqref{eq:long_Nthorderinequality} consists of two steps. In the first step we reshape $\mathcal T$ into  third-order tensors $\mathcal T^{[1]},\dots,\mathcal T^{[N-2]}$ and  compute their matrix unfoldings.
In this step we will make use of  \eqref{eq:mode_n-unfolding} for $N=3$. For the  reader's convenience and for a future reference here we write a third-order version of \eqref{eq:mode_n-unfolding}  explicitly:  if $\mathcal X\in\mathbb C^{I\times J\times K}$, then for all values of indices $i$, $j$, and $k$
\begin{equation}\label{eq:3rdorderanalogue}
\begin{split}
\text{the }(i,j+(k-1)J)\text{th entry of } \unf{X}{1} &=
\text{the }(j,i+(k-1)I)\text{th entry of } \unf{X}{2} =\\
\text{the }(k,i+(j-1)I)\text{th entry of } \unf{X}{3} &=
\text{the }(i,j,k)\text{th entry of }\mathcal X.
\end{split}
\end{equation}
In the second step, we apply the first inequality in \eqref{eqeqs} to each tensor $\mathcal T^{[n]}$, then we
sum up the obtained inequalities and show that the result coincides with inequality \eqref{eq:long_Nthorderinequality}.

{\em Step 1.} Let $n\in\{1,\dots,N-2\}$. A third-order tensor $\mathcal T^{[n]}\in\mathbb C^{I_1\cdots I_{n}\times I_{n+1}\times I_{n+2}\cdots I_N}$ is constructed as follows: 
\begin{multline*} 
\text{the } (i_1+\sum_{k=2}^n(i_k-1)\prod_{l=1}^{k-1}I_l, i_{n+1}, i_{n+2}+\sum_{k=n+3}^N(i_k-1)\prod_{l=n+2}^{k-1}I_l)\text{th entry of }\mathcal T^{[n]}\\
 \text{ is equal to the }(i_1,\dots, i_N)\text{th entry of }\mathcal T. 
\end{multline*}
Now we apply \eqref{eq:3rdorderanalogue} for $\mathcal X=\mathcal T^{[n]}$ and 
$$
i= i_1+\sum_{k=2}^n(i_k-1)\prod_{l=1}^{k-1}I_l,\quad j= i_{n+1},\quad k=i_{n+2}+\sum_{k=n+3}^N(i_k-1)\prod_{l=n+2}^{k-1}I_l.
$$ 
After simple algebraic manipulations, we obtain that
\begin{equation}\label{eq:16}
\begin{split}
&\text{the } (i_1+\sum_{k=2}^n(i_k-1)\prod_{l=1}^{k-1}I_l, i_{n+1}+\sum_{k=n+2}^N(i_k-1)\prod_{l=n+1}^{k-1}I_l)\text{th entry of }\unf{T}{1}^{[n]}=\\
&\text{the } (i_{n+1}, 1+\sum_{\substack{k=2\\ k\ne n+1}}^N (i_k-1)\prod_{\substack{l=1\\ l\ne n+1}}^{k-1}I_l)\text{th entry of }\unf{T}{2}^{[n]}=\\
&\text{the } (i_{n+2}+\sum_{k=n+3}^N(i_k-1)\prod_{l=n+2}^{k-1}I_l, i_1+\sum_{k=2}^{n+1}(i_k-1)\prod_{l=1}^{k-1}I_l)\text{th entry of }\unf{T}{3}^{[n]}=\\
&\text{the }(i_1,\dots, i_N)\text{th entry of }\mathcal T.
\end{split}
\end{equation}
\qquad
{\em Step 2.} From  \eqref{eq:16} and \eqref{eq:mode_n-unfolding} it follows that
\begin{align}
\unf{T}{1}^{[1]}&=\unf{T}{1},\label{eq:first}\\
\unf{T}{2}^{[n]}&=\unf{T}{n+1},\qquad 1\leq n\leq N-2,\label{eq:second}\\
\unf{T}{3}^{[N-2]}&=\unf{T}{N}.\label{eq:third}
\end{align}
Comparing the expressions of $\unf{T}{1}^{[n]}$ and $\unf{T}{3}^{[n]}$ in \eqref{eq:16}, we obtain that
\begin{equation}\label{eq:last}
\unf{T}{3}^{[n]}=\left(\unf{T}{1}^{[n+1]}\right)^T,\qquad 1\leq n\leq N-3.
\end{equation}
By Theorem \ref{th:main}, for every $n\in\{1,\dots,N-2\}$
\begin{equation}\label{eq:byTh1}
\sigma^2_{max}(\unf{T}{1}^{[n]})+\sigma^2_{max}(\unf{T}{2}^{[n]})\leq \|\mathcal T^{[n]}\|^2+ \sigma^2_{max}(\unf{T}{3}^{[n]})=
\|\mathcal T\|^2+\sigma^2_{max}(\unf{T}{3}^{[n]}),
\end{equation}
where $\sigma_{max}(\cdot)$ denotes the largest singular value of a matrix. Substituting \eqref{eq:first}--\eqref{eq:last} into
\eqref{eq:byTh1} we obtain
\begin{align*}
\sigma_1^2+\sigma_2^2 &\leq \|\mathcal T\|^2+\sigma^2_{max}(\unf{T}{3}^{[1]}) = \|\mathcal T\|^2+\sigma^2_{max}(\unf{T}{1}^{[2]}),\ n=1,\\
\sigma^2_{max}(\unf{T}{1}^{[2]})+\sigma_3^2 &\leq \|\mathcal T\|^2+\sigma^2_{max}(\unf{T}{3}^{[2]}) = \|\mathcal T\|^2+\sigma^2_{max}(\unf{T}{1}^{[3]}),\ n=2,\\
&\vdots\\
\sigma^2_{max}(\unf{T}{1}^{[N-3]})+\sigma_{N-2}^2 &\leq \|\mathcal T\|^2+\sigma^2_{max}(\unf{T}{3}^{[N-3]}) = \|\mathcal T\|^2+\sigma^2_{max}(\unf{T}{1}^{[N-2]}),\  n=N-3,\\
\sigma^2_{max}(\unf{T}{1}^{[N-2]})+\sigma_{N-1}^2 &\leq \|\mathcal T\|^2+\sigma^2_{max}(\unf{T}{3}^{[N-2]}) = \|\mathcal T\|^2+\sigma^2_{N},\ n=N-2.
\end{align*}
Summing up the above inequalities and canceling identical terms on the left- and right-hand side we obtain \eqref{eq:long_Nthorderinequality}. 
\end{proof}
\begin{proof}[Proof of Theorem \ref{th:2Nthorder}]
It can be checked that a polyhedron described by the inequalities in \eqref{eqeqsHO}--\eqref{eqeqsHOobv}  is a convex hull of $2^N-N$ points 
\begin{equation}
V=\{
(\alpha_1,\dots,\alpha_N), \quad \alpha_n\in\left\{\frac{1}{I}, 1\right\}\ \text{and at least two of } \alpha\text{-s are equal to }\frac{1}{I}\}.\label{alphas}
\end{equation}
To show that each point of the polyhedron is feasible we proceed as in the proof of Theorem \ref{th:2nonsquare}.

First, for each  $(\alpha_1,\dots,\alpha_N)\in V$  we   construct an all-orthogonal and non-negative $I\times \dots\times I$ tensor $P^{\alpha_1,\dots,\alpha_N}$ whose squared largest ML singular values  are $\alpha_1,\dots,\alpha_N$.

Let $\pi$ denote the cyclic permutation
$
\pi:1\rightarrow I\rightarrow I-1\rightarrow\dots\rightarrow 2\rightarrow 1
$.
The tensor  $\mathcal P^{\frac{1}{I},\dots, \frac{1}{I}}$ is defined by 
\begin{equation*}
\mathcal P^{\frac{1}{I},\dots, \frac{1}{I}}_{i_1,\dots,i_N}=
\begin{cases}
I^{-\frac{N-1}{2}},& \text{if } i_2=\pi^{i_3+\dots+i_N-N+2}(i_1),\\
0,& \text{otherwise},
\end{cases}
\end{equation*}
and the tensor $\mathcal P^{1,\dots, 1}$, by definition, has only one nonzero entry, $\mathcal P^{1\dots 1}_{1,\dots,1}=1$. Let
$(\alpha_1,\dots,\alpha_N)\in V\setminus\{(\frac1I,\dots,\frac1I), (1,\dots,1)\}$ and $j_1,\dots,j_k$ denote all indices such that 
$\alpha_{j_1}=\dots=\alpha_{j_k}=1$. Then the tensor $P^{\alpha_1,\dots,\alpha_N}$ is defined by
\begin{equation*}
\mathcal P^{\alpha_1,\dots,\alpha_N}_{i_1,\dots,i_N}=
\begin{cases}
I^{-\frac{N-1-k}{2}},& \text{if } i_2=\pi^{i_3+\dots+i_N-N+2}(i_1) \text{ and } i_{j_1}=\dots=i_{j_k}=1,\\
0,& \text{otherwise}.
\end{cases}
\end{equation*}
For instance, if $N=4$ and $I=2$, then the first matrix unfolding of $\mathcal P^{\frac{1}{I},\dots, \frac{1}{I}}$
is given by
$$
\mathcal P^{\frac{1}{I},\dots, \frac{1}{I}}_{(1)} = \frac{1}{2\sqrt{2}}\left[
\begin{array}{*{27}c}
1&0&0&1&0&1&1&0\\
0&1&1&0&1&0&0&1\\
\end{array}\right]
$$
and the  first matrix unfoldings of the remaining tensors $\mathcal P^{\alpha_1,\dots,\alpha_N}$
can be obtained from $\mathcal P^{\frac{1}{I},\dots, \frac{1}{I}}_{(1)}$ by rescaling and introducing more zeros.

It is clear that the $(i_1,\dots,i_N)$th entry of a linear combination of $\mathcal P^{\frac{1}{I},\dots,\frac{1}{I}},\dots,\mathcal P^{1,\dots,1}$ may be nonzero only if 
$$
i_2=\pi^{i_3+\dots+i_N-N+2}(i_1) .
$$
The same is also true for  $\mathcal T$ defined  by
\begin{equation*}
\mathcal T=\left(\sum\limits_{(\alpha_1,\dots,\alpha_N)\in V}   t_{\alpha_1,\dots,\alpha_N} P^{{\alpha_1,\dots,\alpha_N}^2} \right)^{\frac{1}{2}},
\end{equation*}
where, as before, the superscripts ``$2$'' and ``$\frac12$''    denote the  entrywise operations.
One can easily check that each column of $\unf{T}{1},\dots,\unf{T}{N}$  contains at most one nonzero entry, implying that $\mathcal T$ is all-orthogonal tensor.
Finally, from the construction of the all-orthogonal tensors $P^{\alpha_1,\dots,\alpha_N}$ it follows that their largest ML singular values are equal to the Frobenius norms of  the first  rows of their  matrix
 unfoldings.
Thus,   the same property should also hold for $\mathcal T$ whenever the values $t_{\alpha_1,\dots,\alpha_N}$ are non-negative. Now the result follows from the fact that the  polyhedron 
described by the inequalities in \eqref{eqeqsHO}--\eqref{eqeqsHOobv}  is a convex hull of
points in $V$.
\end{proof}
Note that in the proof of  Theorem \ref{th:2Nthorder} the constructed tensor ${\mathcal T}$ has squared singular values in the $n$th mode equal to $\sigma_n^2, \frac{1}{I-1}(1-\sigma_n^2),\dots,\frac{1}{I-1}(1-\sigma_n^2)$, i.e., the $I-1$ smallest singular values in the $n$th mode are equal.
\section{Results on feasibility and non-feasibility of the points \texorpdfstring{$S$}{S}, \texorpdfstring{$X_1$}{X1}, and \texorpdfstring{$Y_1$}{Y1}}\label{SX1Y1Z1}
Throughout this subsection  we assume that $\mathcal T$ is a norm-$1$ tensor.

In the following example we show that it may happen that $S$ is the only  feasible point in the plane through  the points $S$, $X_1$, and $Y_1$, i.e., the plane $\sigma_3^2=\frac{1}{I_3}$. 
\begin{example}\label{ex:6}
Let $I_3=I_1I_2$ and $\mathcal T\in\mathbb C^{I_1\times I_2\times I_3}$. 
Assume that $\sigma_3^2=\frac{1}{I_3}$. Then $\unf{T}{3}^H\unf{T}{3}=\frac{1}{I_3}\mathbf I_{I_3}$. Since $\unf{T}{3}$ is a square matrix, it follows that $\unf{T}{3}$ is a scalar multiple of a unitary matrix, $\unf{T}{3}=\frac{1}{\sqrt{I_3}}\mathbf U$. One can easily verify (see \cite[p. 65]{increadible_HOLQ}), that
$\unf{T}{1}^H\unf{T}{1}=\frac{1}{I_1}\mathbf I_{I_1}$ and 
$\unf{T}{2}^H\unf{T}{2}=\frac{1}{I_2}\mathbf I_{I_2}$. Hence, $\sigma_1^2=\frac{1}{I_1}$ and $\sigma_2^2=\frac{1}{I_2}$. 
Thus, the points $X_1$ and $Y_1$ are not feasible.
\end{example}
From Example \ref{ex:6} it follows that the point $S$ is feasible if $I_1=2$, $I_2=3$, and $I_3=6$.
The point $S$ is also feasible if $I_1=2$, $I_2=3$, and $I_3=4$. Indeed, let 
$\mathcal T$ be an $2\times 3\times 4$ tensor with  mode-$3$ matrix
unfolding
\begin{align*}
\unf{T}{3} = 
\frac{1}{2\sqrt{3}}
\begin{bmatrix}
1+\sqrt{3} & 0          & 0          & 1-\sqrt{3} & -2 & 0\\
0          & 1+\sqrt{3} & 1-\sqrt{3} & 0          &  0 & 2\\
0          & 1-\sqrt{3} & 1+\sqrt{3} & 0          &  0 & 2\\
1-\sqrt{3} & 0          & 0          & 1+\sqrt{3} & -2 & 0
\end{bmatrix}.
\end{align*}
Then one can also easily verify that $\unf{T}{1}\unf{T}{1}^H=\frac12 \mathbf I_2$, $\unf{T}{2}\unf{T}{2}^H=\frac13 \mathbf I_3$, and
$\unf{T}{3}\unf{T}{3}^H=\frac14 \mathbf I_4$.
The following result implies that in the ``intermediate'' case  $I_1=2$, $I_2=3$, and $I_3=5$ the point  $S$ is not feasible.
\begin{theorem}\label{th:235}
Let $I_3=I_1I_2-1$, $\mathcal T\in\mathbb C^{I_1\times I_2\times I_3}$, and  $\unf{T}{3}\unf{T}{3}^H=\frac{1}{I_3}\mathbf I_{I_3}$.
Then the following statements hold:
\begin{enumerate}[label=\textnormal{(\roman*)}] 
\item if  $\unf{T}{1}\unf{T}{1}^H=\frac{1}{I_1}\mathbf I_{I_1}$, then  $I_1\leq I_2$;\label{itm2:1}
\item if  $\unf{T}{2}\unf{T}{2}^H=\frac{1}{I_2}\mathbf I_{I_2}$, then  $I_2\leq I_1$;\label{itm2:2}
\item if the point $S$ is feasible, then $I_1=I_2$.\label{itm2:3}
\end{enumerate}
\end{theorem}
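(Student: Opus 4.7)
\medskip
\noindent\textbf{Proof proposal.} The three claims are tightly linked: statement \ref{itm2:2} is just \ref{itm2:1} with the roles of modes $1$ and $2$ interchanged, and \ref{itm2:3} follows from \ref{itm2:1} and \ref{itm2:2} combined, because (as noted in the scaled all-orthonormal discussion preceding the theorem) feasibility of $S$ is equivalent to the three conditions $\unf{T}{n}\unf{T}{n}^H=\frac{1}{I_n}\mathbf I_{I_n}$, $n=1,2,3$, holding simultaneously. So the entire theorem reduces to proving \ref{itm2:1}.

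To prove \ref{itm2:1}, I would interpret the two diagonality hypotheses geometrically in the space $\mathbb C^{I_1\times I_2}$ of frontal slices equipped with the Frobenius inner product. The condition $\unf{T}{3}\unf{T}{3}^H=\frac{1}{I_3}\mathbf I_{I_3}$ has $(i,j)$-entry $\operatorname{tr}(\mathbf T_i\mathbf T_j^H)=\frac{1}{I_3}\delta_{ij}$, so the frontal slices $\mathbf T_1,\dots,\mathbf T_{I_3}$ are pairwise Frobenius-orthogonal with $\|\mathbf T_i\|^2=\frac{1}{I_3}$. Since $I_3=I_1I_2-1=\dim_{\mathbb C}\mathbb C^{I_1\times I_2}-1$, the rescaled slices $\sqrt{I_3}\,\mathbf T_1,\dots,\sqrt{I_3}\,\mathbf T_{I_3}$ can be extended by a single unit-norm matrix $\mathbf M\in\mathbb C^{I_1\times I_2}$ to an orthonormal basis of $\mathbb C^{I_1\times I_2}$.

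The key lemma I would use (a short entrywise computation with the resolution of the identity) is that \emph{for every} orthonormal basis $\{\mathbf B_k\}_{k=1}^{I_1I_2}$ of $\mathbb C^{I_1\times I_2}$ one has
\begin{equation*}
\sum_{k=1}^{I_1I_2}\mathbf B_k\mathbf B_k^H = I_2\,\mathbf I_{I_1}.
\end{equation*}
Indeed the $(a,b)$-entry on the left is $\sum_{k,c}(\mathbf B_k)_{ac}\overline{(\mathbf B_k)_{bc}}=\sum_c\langle \mathbf E_{ac},\mathbf E_{bc}\rangle = I_2\,\delta_{ab}$ by Parseval with respect to the basis $\{\mathbf B_k\}$. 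Applying this to the orthonormal basis $\sqrt{I_3}\,\mathbf T_1,\dots,\sqrt{I_3}\,\mathbf T_{I_3},\mathbf M$ and using the hypothesis $\sum_i\mathbf T_i\mathbf T_i^H=\unf{T}{1}\unf{T}{1}^H=\frac{1}{I_1}\mathbf I_{I_1}$ yields
\begin{equation*}
\mathbf M\mathbf M^H \;=\; I_2\,\mathbf I_{I_1}-\frac{I_3}{I_1}\mathbf I_{I_1}\;=\;\frac{I_1I_2-I_3}{I_1}\,\mathbf I_{I_1}\;=\;\frac{1}{I_1}\mathbf I_{I_1},
\end{equation*}
using $I_3=I_1I_2-1$ at the last step.

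The conclusion is immediate: $\mathbf M\mathbf M^H$ is a positive multiple of $\mathbf I_{I_1}$, hence $\mathbf M\in\mathbb C^{I_1\times I_2}$ has rank $I_1$, which forces $I_1\leq I_2$. By symmetry (swap modes $1$ and $2$) we obtain \ref{itm2:2}, and \ref{itm2:3} follows by combining them. I do not anticipate a genuine obstacle here; the only thing to be careful about is the dimension count, namely that the codimension-one hypothesis $I_3=I_1I_2-1$ is exactly what leaves room for a single extension vector $\mathbf M$, so that the resolution-of-identity lemma pins down $\mathbf M\mathbf M^H$ completely rather than only up to a subspace.
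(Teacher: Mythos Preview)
Your proof is correct and follows essentially the same strategy as the paper: both extend the orthonormal family of (vectorized) frontal slices by a single element to a complete orthonormal system of $\mathbb C^{I_1\times I_2}$ (your matrix $\mathbf M$ is the paper's extension vector $\mathbf a$ reshaped), and then exploit completeness to force that element to have full row rank $I_1$. Your resolution-of-identity lemma $\sum_k\mathbf B_k\mathbf B_k^H=I_2\mathbf I_{I_1}$ is a cleaner packaging of what the paper carries out coordinate-wise via the column orthonormality of the extended unitary matrix $\sqrt{I_3}\begin{bmatrix}\unf{T}{3}\\ \mathbf a^T\end{bmatrix}$.
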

\begin{proof}
\ref{itm2:1}\ Let $\unf{T}{3}=[\mathbf t_1\ \dots\ \mathbf t_{I_1I_2}]$. Then the identity $\unf{T}{1}\unf{T}{1}^H=\frac{1}{I_1} \mathbf I_{I_1}$ is equivalent to the system
\begin{equation}
\begin{split}
&\mathbf t_{i_1}^H\mathbf t_{i_2}+\mathbf t_{I_1+i_1}^H\mathbf t_{I_1+i_2}+\dots+\mathbf t_{I_1(I_2-1)+i_1}^H\mathbf t_{I_1(I_2-1)+i_2} = 0, \\
&\|\mathbf t_{i_1}\|^2+\|\mathbf t_{I_1+i_1}\|^2 +\dots+ \|\mathbf t_{I_1(I_2-1)+i_1}\|^2=\frac{1}{I_1},\qquad\qquad
1\leq i_1<i_2\leq I_1.   
\end{split}\label{eq:contr1}
\end{equation}
Since $\unf{T}{3}\unf{T}{3}^H=\frac{1}{I_3}\mathbf I_{I_3}$, the matrix $\sqrt{I_3}\unf{T}{3}\in\mathbb C^{I_3\times I_1I_2}$ can be extended to a unitary matrix 
$\sqrt{I_3}\begin{bmatrix}
\unf{T}{3}\\
\mathbf a^T
\end{bmatrix}\in\mathbb C^{I_1I_2\times I_1I_2}
$, where  $\mathbf a\in\mathbb C^{I_1I_2}$ is a vector such that $\unf{T}{3}\mathbf a^*=\mathbf 0$ and $\|\mathbf a\|^2=\frac{1}{I_3}$.
Hence,
$$
\begin{bmatrix}
\unf{T}{3}^H&\mathbf a^*
\end{bmatrix}
\begin{bmatrix}
\unf{T}{3}\\
\mathbf a^T
\end{bmatrix}
=\frac{1}{I_3}\mathbf I_{I_2I_3} 
$$
or
\begin{equation}\label{eq:contr2}
\mathbf t_i^H\mathbf t_j+\bar{a}_ia_j=0 \ \text{ for } i\ne j\ \text{ and }\
\|\mathbf t_i\|^2+|a_i|^2=\frac{1}{I_3},\qquad 1\leq i<j\leq I_1I_2.
\end{equation}
From \eqref{eq:contr1}--\eqref{eq:contr2} it follows that
\begin{equation*}
\begin{split}
& \bar{a}_{i_1} a_{i_2}+\bar{a}_{I_1+i_1} a_{I_1+i_2}+\dots+\bar{a}_{I_1(I_2-1)+i_1} a_{I_1(I_2-1)+i_2} = 0, \\
&|a_{i_1}|^2+|a_{I_1+i_1}|^2 +\dots+ |a_{I_1(I_2-1)+i_1}|^2=\frac{1}{I_1},\qquad\qquad
1\leq i_1<i_2\leq I_1.   
\end{split}
\end{equation*}
Thus, the  vectors 
$$
[a_{i}\ a_{I_1+i}\ \dots\ a_{I_1(I_2-1)+i}]^T\in\mathbb C^{I_2},\qquad\qquad 1\leq i\leq I_1
$$ 
are nonzero and mutually  orthogonal. Hence, $I_1\leq I_2$.

\ref{itm2:2}\ The proof is similar to the proof of \ref{itm2:1}.

\ref{itm2:3}\ Since $S$ is feasible, it follows that $\unf{T}{1}\unf{T}{1}^H=\frac{1}{I_1}\mathbf I_{I_1}$ and
$\unf{T}{2}\unf{T}{2}^H=\frac{1}{I_2}\mathbf I_{I_2}$. Hence, by \ref{itm2:1} and \ref{itm2:2}, $I_1=I_2$. 
\end{proof}
\section{The case of at least one equality in \eqref{eqeqs}}\label{Discussion}
The following two lemmas will be used in the proof of Theorem \ref{th:1LL and L1L}.
\begin{lemma}\label{lemma:1from temp}
Let $\newA$ and $\mathbf \Phi(\newA)$ be as in Lemma \ref{lemma:main}.
Then the equality in \eqref{eq:eq1} holds if and only if $\newA$ can be factorized as 
\begin{equation}\label{eq:temp1}
\newA=[\operatorname{vec}(\mathbf W_1)\ \newC\otimes\mathbf x][\operatorname{vec}(\mathbf W_1)\ \newC\otimes\mathbf x]^H,
\end{equation} where
\begin{enumerate}[label=\textnormal{(\roman*)}] 
\item $\mathbf W_1\in\mathbb C^{\newn\times {\newK}}$ and $\mathbf x$ is a principal eigenvector of $\mathbf W_1\mathbf W_1^H$, i.e.,
\begin{equation*}
\mathbf W_1\mathbf W_1^H\mathbf x =\lambda_{max}(\mathbf W_1\mathbf W_1^H)\mathbf x,\quad \|\mathbf x\|=1;
\end{equation*}\label{itm:1}
\item the matrix  $\newC=[\newc_2\dots\newc_R]\in\mathbb C^{{\newK}\times (R-1)}$ has orthogonal columns;\label{itm:2}
\item $\newC^T\mathbf W_1^H\mathbf x=\mathbf 0$;\label{itm:3}
\item $\lambda_{max}(\mathbf W_1^H\mathbf W_1) = \lambda_{max}(\mathbf W_1^H\mathbf W_1+\newC^*\newC^T)$.\label{itm:5}
\end{enumerate}
 Moreover, if \eqref{eq:temp1} and \ref{itm:1}--\ref{itm:5} hold, then 
\begin{align}
\sigma(\sum\limits_{k=1}^{\newK}\newA_{kk})&=
\sigma(\mathbf W_1\mathbf W_1^H + \|\newC\|^2\mathbf x\mathbf x^H),\label{eq:temp2}\\
\sigma(\newA)&=\{\|\mathbf W_1\|^2,\|\newc_2\|^2,\dots,\|\newc_R\|^2,0,\dots,0\},\label{eq:temp3}\\
\sigma(\mathbf \Phi(\newA))&=\sigma(\mathbf W_1^H\mathbf W_1+\newC^*\newC^T)\label{eq:temp4},
\end{align}
where $\sigma(\cdot)$ denotes the spectrum of a matrix.
\end{lemma}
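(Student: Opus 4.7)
I would track the equality conditions through each step of the proof of Lemma \ref{lemma:main} and translate them into the factorization \eqref{eq:temp1} with properties \ref{itm:1}--\ref{itm:5}. The bound in \eqref{eq:eq1} was assembled from two successive estimates: (a) the uniform Cauchy--Schwarz bounds $\sum_k|(\mathbf w_{kr},\mathbf x)|^2\leq \|\mathbf w_r\|^2$ for $r\geq 2$, giving
\[
\lambda_{\max}(\newA)+\lambda_{\max}\Big(\sum_k \newA_{kk}\Big)\leq \operatorname{tr}(\newA)+\max_{\|\mathbf x\|=1}\sum_k|(\mathbf w_{k1},\mathbf x)|^2,
\]
and (b) the spectral step $\lambda_{\max}(\mathbf W_1\mathbf W_1^H)=\lambda_{\max}(\mathbf W_1^H\mathbf W_1)\leq\lambda_{\max}(\sum_r\mathbf W_r^H\mathbf W_r)=\lambda_{\max}(\mathbf\Phi(\newA))$. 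Equality in \eqref{eq:eq1} forces equality in both (a) and (b).

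For the ``only if'' direction, expand $\newA=\sum_{r=1}^R\mathbf w_r\mathbf w_r^H$ with orthogonal $\mathbf w_r$ and, as in the proof of Lemma \ref{lemma:main}, relabel so $\|\mathbf w_1\|=\max_r\|\mathbf w_r\|$, so that $\lambda_{\max}(\newA)=\|\mathbf W_1\|^2$. Equality in (a) forces a unit vector $\mathbf x$ that simultaneously maximizes $\sum_k|(\mathbf w_{k1},\mathbf x)|^2$ (giving \ref{itm:1}) and satisfies $\sum_k|(\mathbf w_{kr},\mathbf x)|^2=\|\mathbf w_r\|^2$ for every $r\geq 2$. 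By term-by-term Cauchy--Schwarz equality, the latter forces $\mathbf w_{kr}=c_{kr}\mathbf x$ for all $k$ and all $r\geq 2$, so $\mathbf W_r=\mathbf x\newc_r^T$ and $\mathbf w_r=\newc_r\otimes\mathbf x$. Substituting back, the orthogonality $\mathbf w_r^H\mathbf w_s=0$ for distinct $r,s\in\{2,\dots,R\}$ reduces to $\newc_r^H\newc_s=0$ (property \ref{itm:2}), the orthogonality $\mathbf w_1^H\mathbf w_r=0$ for $r\geq 2$ reduces to $\newc_r^T\mathbf W_1^H\mathbf x=0$ (property \ref{itm:3}), and equality in (b) is exactly \ref{itm:5}. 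Concatenating $\operatorname{vec}(\mathbf W_1)$ with the vectors $\newc_r\otimes\mathbf x$ yields \eqref{eq:temp1}.

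For the ``if'' direction together with the spectral identities \eqref{eq:temp2}--\eqref{eq:temp4}, I would use the routine computations $\mathbf W_r\mathbf W_r^H=\|\newc_r\|^2\mathbf x\mathbf x^H$ and $\mathbf W_r^H\mathbf W_r=\newc_r^*\newc_r^T$ for $r\geq 2$ (both using $\|\mathbf x\|=1$). These give \eqref{eq:temp2} directly and \eqref{eq:temp4} after noting that $\mathbf\Phi(\newA)$ is Hermitian positive semidefinite with $\mathbf\Phi(\newA)^*=\sum_r\mathbf W_r^H\mathbf W_r$. Conditions \ref{itm:2}--\ref{itm:3} make $\operatorname{vec}(\mathbf W_1),\newc_2\otimes\mathbf x,\dots,\newc_R\otimes\mathbf x$ an orthogonal family, so \eqref{eq:temp3} is read off from the squared norms. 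Finally, \ref{itm:1} and \ref{itm:3} imply that $\mathbf x$ is an eigenvector of $\sum_k\newA_{kk}=\mathbf W_1\mathbf W_1^H+\|\newC\|^2\mathbf x\mathbf x^H$ with eigenvalue $\lambda_{\max}(\mathbf W_1\mathbf W_1^H)+\|\newC\|^2$, necessarily the largest; combined with $\lambda_{\max}(\mathbf\Phi(\newA))=\lambda_{\max}(\mathbf W_1\mathbf W_1^H)$ from \ref{itm:5} and $\operatorname{tr}(\newA)=\|\mathbf W_1\|^2+\|\newC\|^2$, the equality in \eqref{eq:eq1} follows.

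The point I expect to be delicate in the ``if'' direction is identifying $\lambda_{\max}(\newA)=\|\mathbf W_1\|^2$, since the hypotheses of the lemma do not explicitly order the $\|\mathbf w_r\|$'s. Property \ref{itm:5} supplies this: because \ref{itm:2} forces $\newC$ to have orthogonal columns, $\lambda_{\max}(\newC^*\newC^T)=\max_{r\geq 2}\|\newc_r\|^2$; since $\mathbf W_1^H\mathbf W_1$ and $\newC^*\newC^T$ are both positive semidefinite, \ref{itm:5} gives $\lambda_{\max}(\mathbf W_1^H\mathbf W_1)\geq\max_{r\geq 2}\|\newc_r\|^2$, and combined with the obvious $\lambda_{\max}(\mathbf W_1^H\mathbf W_1)\leq\|\mathbf W_1\|_F^2=\|\mathbf W_1\|^2$ this yields $\|\mathbf W_1\|^2\geq\max_{r\geq 2}\|\newc_r\|^2$, confirming that the largest entry in \eqref{eq:temp3} is indeed $\|\mathbf W_1\|^2$.
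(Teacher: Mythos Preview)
Your proposal is correct and follows essentially the same approach as the paper: you trace equality through the two estimates \eqref{eq:eq5} and \eqref{equation18} in the proof of Lemma~\ref{lemma:main}, use the Cauchy--Schwarz equality case to force $\mathbf w_{kr}=c_{kr}\mathbf x$ for $r\geq 2$, read off \ref{itm:2}--\ref{itm:3} from the orthogonality of the $\mathbf w_r$'s, and identify \ref{itm:5} with equality in \eqref{equation18}. The only organizational difference is that the paper derives \eqref{eq:temp2}--\eqref{eq:temp4} first and then checks sufficiency, whereas you interleave them; also, your invocation of \ref{itm:3} when showing $\mathbf x$ is a top eigenvector of $\sum_k\newA_{kk}$ is superfluous (property \ref{itm:1} alone suffices), but this is harmless.
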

\begin{proof}
The proof essentially relies on  the proof of Lemma \ref{lemma:main} so we use the same notations and conventions as in the proof of Lemma \ref{lemma:main}. 

{\em Derivation of \eqref{eq:temp2}--\eqref{eq:temp4}.} Assume that \ref{eq:temp1} and \ref{itm:1}--\ref{itm:5} hold. 
Then 
$$
\newA=\sum\limits_{r=1}^R \operatorname{vec}(\mathbf W_r)\operatorname{vec}(\mathbf W_r)^H,\qquad \text{ where } \mathbf W_r=\mathbf x\newc_r^T\ \text{  for }\ r=2,\dots,R.
$$
 Hence
$$
\sum\limits_{k=1}^{\newK}\newA_{kk} = \sum\limits_{r=1}^R\mathbf W_r\mathbf W_r^H=
\mathbf W_1\mathbf W_1^H+\sum\limits_{r=2}^R
\mathbf x\newc_r^T\newc_r^*\mathbf x^H=
\mathbf W_1\mathbf W_1^H + \|\newC\|^2\mathbf x\mathbf x^H,
$$
which implies \eqref{eq:temp2}. By \ref{itm:2}, \ref{itm:3}, and the convention $\|x\|=1$ in \ref{itm:1}, the 
vectors $\operatorname{vec}(\mathbf W_r)$ are mutually orthogonal, which implies \eqref{eq:temp3}.
Finally, by \eqref{eq:eq4.5}, 
$$
\mathbf \Phi(\newA) = \sum\limits_{r=1}^R\mathbf W_r^T\mathbf W_r^* = \mathbf W_1^T\mathbf W_1^*+
\sum\limits_{r=1}^R\newc_r\mathbf x^T\mathbf x^*\newc_r^H=
\mathbf W_1^T\mathbf W_1^*+\newC\newC^H,
$$
which implies \eqref{eq:temp4}.

{\em Sufficiency.} By \ref{itm:1} and \eqref{eq:temp2}, 
$$
\lambda_{max}(\sum\limits_{k=1}^{\newK}\newA_{kk})=
\lambda_{max}(\mathbf W_1\mathbf W_1^H)+\|\newC\|^2.
$$
 By \ref{itm:5} and \ref{itm:2},
$$
\|\mathbf W_1\|^2\geq\lambda_{max}(\mathbf W_1^H\mathbf W_1)\geq\lambda_{max}(\newC^*\newC^T)=
\max\limits_{2\leq r\leq R}\|\newc_r\|^2.
$$ 
 Thus, by  \eqref{eq:temp3}, $\lambda_{max}(\newA)=\|\mathbf W_1\|^2$ and $\operatorname{tr}(\newA)=\|\mathbf W_1\|^2 + \|\newC\|^2$.
By \ref{itm:5} and \eqref{eq:temp4}, 
$\lambda_{max} (\mathbf \Phi(\newA))= \lambda_{max}(\mathbf W_1^H\mathbf W_1)$. Thus,
the left- and right-hand sides of \eqref{eq:eq1} are equal to $\lambda_{max}(\mathbf W_1\mathbf W_1^H)+
\|\mathbf W_1\|^2 +\|\newC\|^2$.

{\em Necessity.} It is clear that
the equality in \eqref{eq:eq1} holds if and only it holds in \eqref{eq:eq5} and \eqref{equation18}. 
So we  replace the inequality signs  in \eqref{eq:eq5} and \eqref{equation18} with an equality sign.

From the first line of \eqref{equation18} it follows that $\mathbf x$ satisfies \ref{itm:1}.
By the Cauchy inequality, the equality 
$$
\sum\limits_{k=1}^{\newK}\sum\limits_{r=2}^R|(\mathbf w_{kr},\mathbf x)|^2=
\sum\limits_{r=2}^R\|\mathbf w_r\|^2
$$
in \eqref{eq:eq5} would imply  that
$$
\mathbf w_{kr}=c_{kr}\mathbf x,\qquad\qquad k=1,\dots,{\newK},\quad r=2,\dots,R.
$$
for some  $c_{kr}\in\mathbb C$. Hence,
\begin{equation}\label{eq:temp6}
\mathbf w_r=[\mathbf w_{1r}^T \ \dots\ \mathbf w_{{\newK} r}^T]^T=[c_{1r}\ \dots \ c_{{\newK} r}]^T\otimes \mathbf x=\newc_r\otimes\mathbf x,\qquad r=2,\dots,R.
\end{equation}
Since $\newA=\sum\limits_{r=1}^R\mathbf w_{r}\mathbf w_{r}^H$, it follows that
$$
\newA=[\mathbf w_1\ \dots\ \mathbf w_R][\mathbf w_1\ \dots\ \mathbf w_R]^H=
[\mathbf w_1\ \newc_2\otimes\mathbf x\ \dots\ \newc_R\otimes\mathbf x]
[\mathbf w_1\ \newc_2\otimes\mathbf x\ \dots\ \newc_R\otimes\mathbf x]^H,
$$
which coincides with \eqref{eq:temp1}.
The mutual orthogonality of $\mathbf w_2,\dots,\mathbf w_R$ and the orthogonality of $\mathbf w_1$ to $\mathbf w_2,\dots,\mathbf w_R$ implies \ref{itm:2} and \ref{itm:3}, respectively.
By \eqref{eq:temp6}, 
$\mathbf W_r=\mathbf x\newc_r^T$ for $r=2,\dots,R$. Hence, the equality 
$$
\lambda_{max}\left(
\mathbf W_1^H\mathbf W_1\right)= \lambda_{max}\left(\sum\limits_{r=1}^R\mathbf W_r^H\mathbf W_r\right)
$$
in \eqref{equation18} would imply \ref{itm:5}:
$$
\lambda_{max}\left(
\mathbf W_1^H\mathbf W_1\right)=
\lambda_{max}\left(\mathbf W_1^H\mathbf W_1 + \sum\limits_{r=1}^R\newc_r^*\mathbf x^H\mathbf x\newc_r^T\right)=
\lambda_{max}(\mathbf W_1^H\mathbf W_1+\newC^*\newC^T).
$$
\end{proof}
\begin{lemma}\label{lemma:10}\qquad\\
\textup{(i)}\ 
Let $\mathbf W_1$, $\newC$,  and $\mathbf x$ satisfy conditions \ref{itm:1}--\ref{itm:5} of Lemma  \ref{lemma:1from temp}, 
$\newA$ be defined as in  \eqref{eq:temp1} and $L:=\lambda_{max}(\mathbf W_1^H\mathbf W_1)$. Then there exist $({\newK}-1)\times({\newK}-1)$ positive semidefinite matrices
$\mathbf A$ and $\mathbf B$ such that 
\begin{equation}\label{eq:ABL}
\operatorname{rank}(\mathbf A)\leq \min(\newn,{\newK})-1, \qquad
\operatorname{rank}(\mathbf B)=R-1,\qquad L\geq\lambda_{max}(\mathbf A+\mathbf B)
\end{equation}
 and
\begin{align}
\sigma(\sum\limits_{k=1}^{\newK}\newA_{kk})&=
\{L + \operatorname{tr}(\mathbf B),\lambda_1(\mathbf A),\dots,
\lambda_{\min(\newn,{\newK})-1}(\mathbf A),\underbrace{0,\dots,0}_{\newn-{\newK}}\},\label{eq:temp2AB}\\
\sigma(\newA)&=
\{L + \operatorname{tr}(\mathbf A),\lambda_1(\mathbf B),\dots,
\lambda_{R-1}(\mathbf B),\underbrace{0,\dots,0}_{\newn {\newK}-R}\},\label{eq:temp3AB}\\
\sigma(\mathbf \Phi(\newA))&=\{L\}\cup\sigma(\mathbf A+\mathbf B).\label{eq:temp4AB}
\end{align}
\textup{(ii)}\ Let a positive value $L$ and $({\newK}-1)\times({\newK}-1)$ positive semidefinite matrices $\mathbf A$ and $\mathbf B$ satisfy \eqref{eq:ABL}. Then there exists a matrix $\newA$ of form \eqref{eq:temp1} such that
 \eqref{eq:temp2AB}--\eqref{eq:temp4AB} hold.
\end{lemma}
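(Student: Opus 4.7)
The plan is to locate a common eigenvector of $\mathbf W_1^H \mathbf W_1$ and $\newC^* \newC^T$ and split both operators along it. Assume $L > 0$ (otherwise $\mathbf W_1 = 0$ and everything is trivial) and set $\mathbf y := \mathbf W_1^H \mathbf x / \sqrt{L}$. A direct computation gives $\|\mathbf y\|^2 = \mathbf x^H \mathbf W_1 \mathbf W_1^H \mathbf x / L = 1$ and $\mathbf W_1^H \mathbf W_1 \mathbf y = L \mathbf y$; condition \ref{itm:3} of Lemma \ref{lemma:1from temp} becomes $\newC^T \mathbf y = \mathbf 0$, which is equivalent to $\mathbf y \in \ker(\newC^* \newC^T)$. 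Thus $\mathbb C^{\newK}$ splits orthogonally as $\mathbb C \mathbf y \oplus \mathbf y^\perp$ and both Hermitian operators preserve this decomposition.

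For part (i), let $\mathbf A$ and $\mathbf B$ be the $(\newK-1) \times (\newK-1)$ PSD matrices representing $\mathbf W_1^H \mathbf W_1$ and $\newC^* \newC^T$, respectively, in an orthonormal basis of $\mathbf y^\perp$. Since $\mathbf y$ lies in the range of $\mathbf W_1^H \mathbf W_1$, restricting drops the rank by one, yielding $\operatorname{rank}(\mathbf A) \leq \min(\newn, \newK) - 1$. Since $\mathbf y$ lies in the kernel of $\newC^* \newC^T$ (whose rank is $R-1$ by \ref{itm:2}), the restriction preserves rank, so $\operatorname{rank}(\mathbf B) = R-1$. The bound $\lambda_{max}(\mathbf A + \mathbf B) \leq L$ is just \ref{itm:5} restricted to $\mathbf y^\perp$. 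The spectral identities now follow by combining this decomposition with \eqref{eq:temp2}--\eqref{eq:temp4}: \eqref{eq:temp4AB} because $\sigma(\mathbf W_1^H \mathbf W_1 + \newC^* \newC^T) = \{L\} \cup \sigma(\mathbf A + \mathbf B)$; \eqref{eq:temp2AB} because $\mathbf W_1 \mathbf W_1^H$ shares the nonzero spectrum $\{L\} \cup \sigma(\mathbf A)$ of $\mathbf W_1^H \mathbf W_1$, and adding $\|\newC\|^2 \mathbf x \mathbf x^H = \operatorname{tr}(\mathbf B)\, \mathbf x \mathbf x^H$ shifts the eigenvalue at $\mathbf x$ from $L$ to $L + \operatorname{tr}(\mathbf B)$; and \eqref{eq:temp3AB} because $\|\mathbf W_1\|^2 = \operatorname{tr}(\mathbf W_1^H \mathbf W_1) = L + \operatorname{tr}(\mathbf A)$ while the $\|\newc_r\|^2$ are exactly the nonzero eigenvalues of $\newC^* \newC^T$, equivalently of $\mathbf B$.

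For part (ii), reverse the construction. Set $\mathbf y := \mathbf e_1 \in \mathbb C^{\newK}$ and let $\tilde{\mathbf A} := 0 \oplus \mathbf A$ and $\tilde{\mathbf B} := 0 \oplus \mathbf B$ be the $\newK \times \newK$ PSD Hermitian lifts with $\mathbf e_1$ in their common kernel. The matrix $L\, \mathbf e_1 \mathbf e_1^T + \tilde{\mathbf A}$ is PSD of rank at most $1 + \operatorname{rank}(\mathbf A) \leq \min(\newn, \newK)$, so it factors as $\mathbf W_1^H \mathbf W_1$ for some $\mathbf W_1 \in \mathbb C^{\newn \times \newK}$; choosing an SVD with $\mathbf e_1$ as a principal right singular vector, take $\mathbf x \in \mathbb C^{\newn}$ to be the corresponding unit left singular vector, so that $\mathbf W_1^H \mathbf x = \sqrt{L}\, \mathbf e_1$ and $\mathbf x$ is a principal eigenvector of $\mathbf W_1 \mathbf W_1^H$. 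Pick orthonormal eigenvectors $\tilde{\mathbf q}_1, \dots, \tilde{\mathbf q}_{R-1} \in \mathbf e_1^\perp$ of $\tilde{\mathbf B}$ for its nonzero eigenvalues $\mu_1, \dots, \mu_{R-1}$ (possible since $\operatorname{rank}(\mathbf B) = R-1 \leq \newK - 1$), and set $\newC := [\sqrt{\mu_1}\, \tilde{\mathbf q}_1^*, \dots, \sqrt{\mu_{R-1}}\, \tilde{\mathbf q}_{R-1}^*]$; then $\newC$ has orthogonal columns, vanishing first row, and $\newC^* \newC^T = \tilde{\mathbf B}$, which in particular gives $\newC^T \mathbf e_1 = \mathbf 0$. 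Finally, define $\newA$ by \eqref{eq:temp1}. Conditions \ref{itm:1}--\ref{itm:3} of Lemma \ref{lemma:1from temp} hold by construction, and \ref{itm:5} follows from $\lambda_{max}(\mathbf W_1^H \mathbf W_1) = \max(L, \lambda_{max}(\mathbf A)) = L$ together with $\lambda_{max}(\mathbf W_1^H \mathbf W_1 + \newC^* \newC^T) = \max(L, \lambda_{max}(\mathbf A + \mathbf B)) = L$, using the hypothesis \eqref{eq:ABL}. The spectra \eqref{eq:temp2AB}--\eqref{eq:temp4AB} then follow from part (i) applied to the constructed data.

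The main obstacle is the dimensional bookkeeping in part (ii): verifying that the given rank bounds on $\mathbf A$ and $\mathbf B$ are exactly what is needed so that $\mathbf W_1 \in \mathbb C^{\newn \times \newK}$ and $\newC \in \mathbb C^{\newK \times (R-1)}$ can be simultaneously constructed with the prescribed common-eigenvector structure at $\mathbf e_1$; part (i) is a routine invariant-subspace argument once $\mathbf y$ has been identified.
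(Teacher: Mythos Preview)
Your proof is correct and follows essentially the same approach as the paper's: both arguments identify a unit vector that is simultaneously a principal eigenvector of $\mathbf W_1^H\mathbf W_1$ and lies in $\ker(\newC^*\newC^T)$, then define $\mathbf A$ and $\mathbf B$ as the restrictions to its orthogonal complement; the part~(ii) constructions are likewise the same up to notation. The only minor variation is that you obtain this vector explicitly as $\mathbf y=\mathbf W_1^H\mathbf x/\sqrt{L}$ and invoke condition~\ref{itm:3}, whereas the paper picks an arbitrary principal eigenvector $\mathbf p$ of $\mathbf W_1^H\mathbf W_1$ and uses condition~\ref{itm:5} to force $\newC^*\newC^T\mathbf p=\mathbf 0$.
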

\begin{proof}
\textup{(i)}\ 
Let $\mathbf p$ be a principal eigenvector of $\mathbf W_1\mathbf W_1^H$, i.e., $\mathbf W_1^H\mathbf W_1\mathbf p= L\mathbf p$,  $\|\mathbf p\|=1$. Then, by \ref{itm:5}, $\newC^*\newC^T\mathbf p=0$.
Let $\mathbf U_{\mathbf p}$ be an ${\newK}\times {\newK}$ unitary matrix  whose first column is $\mathbf p$. Then
\begin{equation}\label{eq:AandB}
\mathbf U_{\mathbf p}^H\mathbf W_1^H\mathbf W_1\mathbf U_{\mathbf p}=
\begin{bmatrix}
L&\mathbf 0\\
\mathbf 0& \mathbf A
\end{bmatrix},
\qquad
\mathbf U_{\mathbf p}^H\newC^*\newC^T\mathbf U_{\mathbf p}=
\begin{bmatrix}
0&\mathbf 0\\
\mathbf 0& \mathbf B
\end{bmatrix},
\end{equation}
where $\mathbf A$ and $\mathbf B$ are $({\newK}-1)\times ({\newK}-1)$ positive semidefinite matrices. It is clear that
\begin{align}
\lambda_k(\mathbf A)=\lambda_{k+1}(\mathbf W_1^H\mathbf W_1),\qquad k=1,\dots,{\newK}-1,\label{eq:lambdaA}\\
\lambda_k(\mathbf B)=\lambda_{k}(\newC^*\newC^T)=
\begin{cases}
\|\newc_{k+1}\|^2,& k=1,\dots,R-1,\\
0,& k=R,\dots,{\newK}-1.
\end{cases}\label{eq:lambdaB}
\end{align}
Now, \eqref{eq:temp3AB} follows from \eqref{eq:temp3} and \eqref{eq:AandB} and \eqref{eq:temp4AB} follows from \eqref{eq:temp4} and \eqref{eq:AandB}. To prove \eqref{eq:temp2AB} we rewrite \eqref{eq:temp2} as
\begin{equation}\label{eq:temp2ABnew}
\sigma(\sum\limits_{k=1}^{\newK}\newA_{kk})=\{\lambda_{max}(\mathbf W_1\mathbf W_1^H)+\|\newC\|^2,
\lambda_2(\mathbf W_1\mathbf W_1^H),\dots, \lambda_{\newn}(\mathbf W_1\mathbf W_1^H)\}.
\end{equation}
Since the nonzero eigenvalues of $\mathbf W_1\mathbf W_1^H$
coincide with those of $\mathbf W_1^H\mathbf W_1$ and $\|\newC\|^2=\operatorname{tr}(\mathbf B)$ it follows that \eqref{eq:temp2ABnew} is equivalent to  \eqref{eq:temp2AB}. 

\textup{(ii)}\
Let $\mathbf H$ be defined as in \eqref{eq:temp1}, where  
$$
\mathbf W_1= 
\begin{bmatrix}
\sqrt{L}&\mathbf 0\\
\mathbf 0& \widetilde{\mathbf W}_1
\end{bmatrix}, \qquad
 \newC=\mathbf U\mathbf S^{\frac{1}{2}},\qquad
\mathbf x=[1\ 0\ \dots\ 0]^T,
$$
$\widetilde{\mathbf W}_1$ is an $(\newn-1)\times (\newK-1)$ matrix such that
$\widetilde{\mathbf W}_1^H\widetilde{\mathbf W}_1=\mathbf A$ and $\mathbf U\mathbf S\mathbf U^H$ is the reduced singular value decomposition of $\begin{bmatrix}
0&\mathbf 0\\
\mathbf 0& \mathbf B
\end{bmatrix}$. 
One can easily verify   conditions \ref{itm:1}--\ref{itm:5} in Lemma \ref{lemma:1from temp} hold. Hence,
by Lemma \ref{lemma:1from temp}, \eqref{eq:temp2}--\eqref{eq:temp4} also hold. Substituting
$\mathbf W_1$, $\newC$, and $\mathbf x$ in \eqref{eq:temp2}--\eqref{eq:temp4} we obtain
\eqref{eq:temp2AB}--\eqref{eq:temp4AB}.
\end{proof}
\begin{proof}[Proof of Theorem \ref{th:1LL and L1Lnew}]
Let $\newA = \unf{T}{2}^T\unf{T}{2}^*$. Since $\sigma_1^2+\sigma_2^2=\|\mathcal T\|^2+\sigma_3^2$,  it follows that equality \eqref{eq:eq1} holds. Hence, by Lemma \ref{lemma:1from temp}, $\newA$ can be factorized as in \eqref{eq:temp1}. Therefore, there exists an  $I_2\times R$ matrix $\mathbf V$
whose columns are orthonormal and  such that $\unf{T}{2}^T = [\operatorname{vec}(\mathbf W_1)\ \newC\otimes\mathbf x]\mathbf V^H$, or equivalently,
$$
\mathbf T_k = [\mathbf w_{1k}\ \mathbf x[g_{k1}\ \dots\ g_{kR}]]\mathbf V^H,\qquad k=1,\dots,I_3.
$$
Let $\mathcal W$ and $\mathcal G$ denote the $I_1\times I_2\times I_3$ tensors whose $k$th frontal slice is
$[\mathbf w_{1k}\ \mathbf 0\ \dots\ \mathbf 0]\mathbf V^H$ and $[\mathbf 0\ \mathbf x[g_{k1}\ \dots\ g_{kR}]]\mathbf V^H$, respectively. It is clear that $\mathcal T=\mathcal W+\mathcal G$, $\mathcal W$ is  
 ML rank-$(L_1,1,L_1)$ tensor, and $\mathcal G$ is  ML rank-$(1,L_2,L_2)$ tensors, where 
$L_1\leq\min (I_1,I_3)$ and $L_2\leq\min (I_2-1,I_3)$.
\end{proof}
\begin{proof}[Proof of Theorem \ref{th:1LL and L1L}]
Let $\newA = \unf{T}{2}^T\unf{T}{2}^*$. Then
\begin{align}
&\sigma_{11}^2\geq \sigma_{12}^2\geq\dots\geq\sigma_{1I_1}^2\geq 0,& 
&\text{ are the eigenvalues of }\sum\limits_{i=1}^{I_3} {\newA}_{ii}=\unf{T}{1}\unf{T}{1}^H,&\label{eq:50} \\
&\sigma_{21}^2\geq \sigma_{22}^2\geq\dots\geq\sigma_{2I_2}^2\geq 0,&
&\text{ are the first }I_2\text{ eigenvalues of }\newA,&\label{eq:51}\\
&\sigma_{31}^2\geq \sigma_{32}^2\geq\dots\geq\sigma_{3I_3}^2\geq 0,&
&\text{ are the eigenvalues of }\mathbf\Phi(\newA)=\unf{T}{3}\unf{T}{3}^H.\label{eq:52}&
\end{align}

{\em Necessity.} By Lemmas \ref{lemma:1from temp} and \ref{lemma:10}\textup{(i)}, there exist $({\newK}-1)\times({\newK}-1)$ positive semidefinite matrices $\mathbf A$ and $\mathbf B$ such that \eqref{eq:ABL}--\eqref{eq:temp4AB} hold. Thus, by \eqref{eq:seventeen} and \eqref{eq:50}--\eqref{eq:52},
the values $\alpha_i$, $\beta_i$, and $\gamma_i$ are eigenvalues of $\mathbf A$, $\mathbf B$, and $\mathbf A+\mathbf B$, respectively. Hence, by Horn's conjecture, \eqref{eq:traceiequality} and \eqref{eq:listofinequalities} hold.

{\em Sufficiency.} Since \eqref{eq:traceiequality} and \eqref{eq:listofinequalities} hold, from  Horn's conjecture it follows that there exist $({\newK}-1)\times({\newK}-1)$ positive semidefinite matrices $\mathbf A$ and $\mathbf B$ such that $\alpha_i$, $\beta_i$, and $\gamma_i$ are eigenvalues of $\mathbf A$, $\mathbf B$, and $\mathbf A+\mathbf B$, respectively. Hence, by Lemma \ref{lemma:10}\textup{(ii)}, there exists a matrix $\newA$ of form \eqref{eq:temp1} such that  \eqref{eq:temp2AB}--\eqref{eq:temp4AB} hold. By \eqref{eq:seventeen} and 
\eqref{eq:temp3AB}, $\operatorname{rank}{\newA}\leq 1+R\leq I_2$. Let $\mathbf V$ be  an $I_2\times R$ matrix
whose columns are orthonormal and let  $\mathcal T$ denote an $I_1\times I_2\times I_3$ tensor with mode-$2$ matrix unfolding $\unf{T}{2} = \mathbf V^*[\operatorname{vec}(\mathbf W_1)\ \newC\otimes\mathbf x]^T$. Then,
$\newA = \unf{T}{2}^T\unf{T}{2}^*$. The proof now follows from \eqref{eq:50}--\eqref{eq:52}.
\end{proof}
\section{Conclusion}
In the paper we  studied  geometrical properties of the set
\begin{equation*}
\begin{split}
&\Sigma_{I_1,I_2,I_3} :=\left\{(\sigma_{11}^2,\dots,\sigma_{1I_1}^2,\sigma_{21}^2,\dots,\sigma_{2I_2}^2,\sigma_{31}^2,\dots,\sigma_{3I_3}^2):\ \right.\\
&\left.
\sigma_{nk}\ \text{is the }k\text{th largest mode-}n \text{ singular value of an } I_1\times I_2\times I_3\ \text{norm-1 tensor } \mathcal T\right\},
\end{split}
\end{equation*}
 where for each $n=1,2,3$ the values $\sigma_{nk}$ are sorted in descending order.

Let  $\pi$ denote a projection of $\mathbb R^{I_1+I_2+I_3}$ onto the
first, $(I_1+1)$th, and $(I_1+I_2+1)$th coordinates.
We have shown that there exist two convex polyhedrons of positive volume such that the set $\pi(\Sigma_{I_1,I_2,I_3})\subset\mathbb R^3$ contains one polyhedron (Theorem \ref{th:2nonsquare}) and is contained in another (Theorem \ref{th:main}). We have also shown that both
polyhedrons coincide for cubic tensors, i.e., for $I_1=I_2=I_3$ (Corollary \ref{th:2}), and can be different in the non-cubic case (Example \ref{ex:6} and Theorem \ref{th:235}).

In Theorem \ref{th:1LL and L1L}, we considered the case where the largest ML singular values of $\mathcal T$ satisfy the equality
$$
\sigma_{11}^2+\sigma_{21}^2 =  1 + \sigma_{31}^2 \text{ or }
\sigma_{11}^2+\sigma_{31}^2 =  1 + \sigma_{21}^2 \text{ or }
\sigma_{21}^2+\sigma_{31}^2 =  1 + \sigma_{11}^2
$$  
and described the preimage  $\pi^{-1}(\Sigma_{I_1,I_2,I_3})$. 
The description implies that $\pi^{-1}(\Sigma_{I_1,I_2,I_3})$ is a convex polyhedron. This seems to indicate that the whole set  
$\Sigma_{I_1,I_2,I_3}$ is also a convex polyhedron.  
As the description of $\pi^{-1}(\Sigma_{I_1,I_2,I_3})$ relies on a problem concerning the  eigenvalues of the sum of two Hermitian matrices that has long been standing, the  complete  description of $\Sigma_{I_1,I_2,I_3}$ could be an even harder problem.

We have also proved a higher-order generalizations  of Theorem  \ref{th:main} (Theorem \ref{th:main2})  and Corollary \ref{th:2} (Theorem \ref{th:2Nthorder}).
\section*{Acknowledgments}
The authors express their gratitude to the mathoverflow.net user with nickname  @fedja for his help in proving Lemma \ref{lemma:main} \cite{fedja}. 
\appendix
\section{Definition of $T_r^n$}\label{sec:appendixA} 
In our presentation we follow \cite[p. 302]{Bhatia2001}.

The set $T_r^n$ of triplets $(I,J,K)$ of cardinality $r$ can be described by induction on $r$ as follows.

Let us write $I=\{i_1<i_2<\dots<i_r\}$ and likewise for $J$ and $K$. Then for $r=1$, $(I,J,K)$
is in $T_1^n$ if $k_1=i_1+j_1-1$. For $r>1$, $(I,J,K)$ is in $T_r^n$ if
$$
\sum\limits_{i\in I}i+
\sum\limits_{j\in J}j=
\sum\limits_{k\in K}k+\frac{r(r+1)}{2},
$$
and, for all $1\leq p\leq r-1$ and all $(U,V,W)\in T_p^r$,
$$
\sum\limits_{u\in U}i_u+
\sum\limits_{v\in V}j_v=
\sum\limits_{w\in W}k_w+\frac{p(p+1)}{2}.
$$
Thus, $T_r^n$ is defined recursively in terms of $T_1^r,\dots, T_{r-1}^r$.
\bibliographystyle{siamplain}
\bibliography{refs_s_values}
\end{document}